\newtheorem{theorem}{Theorem}
\newtheorem{definition}[theorem]{Definition}
\newtheorem{lemma}[theorem]{Lemma}
\newtheorem{proposition}[theorem]{Proposition}
\numberwithin{equation}{section}
\begin{document}

\title[elliptic sine-Gordon equation]{A complete classification of finite Morse index solutions to  elliptic
sine-Gordon equation}

\author[Y.Liu]{Yong Liu}

\address{\noindent School of Mathematics and Physics, North China Electric Power University, Beijing, China}
\email{liuyong@ncepu.edu.cn}

\author[J. Wei]{Juncheng Wei}
\address{\noindent
Department of Mathematics,
University of British Columbia, Vancouver, B.C., Canada, V6T 1Z2}
\email{jcwei@math.ubc.ca}

\begin{abstract}
The elliptic sine-Gordon equation in the plane has a family of explicit
multiple-end solutions (soliton-like solutions). We show that all the finite
Morse index solutions belong to this family. We also prove they are
non-degenerate in the sense that the corresponding linearized operators have
no nontrivial bounded kernel. We then show that solutions with $2n$ ends have Morse
index $n(n-1)/2.$

\end{abstract}

\maketitle

\section{Introduction and statement of the main results}

The classical sine-Gordon equation originally arises from the study of
surfaces with constant negative curvature in the nineteenth century. It also
appears in many physical contexts such as Josephson junction. It is an
important partial differential equation and has been extensively studied partly due to the fact that it
is an integrable system and one can use the technique of inverse scattering
transform to analyze its solutions. There are vast literatures on this
subject. We refer to the papers in the book \cite{Sine} and the references
cited there for more information about the background and detailed discussion
for this equation.

In the space-time coordinate, the sine-Gordon equation has the form
\begin{equation}
\partial_{x}^{2}u-\partial_{t}^{2}u=\sin u. \label{hs}%
\end{equation}
In this paper, we are interested in the elliptic version of this equation.
More precisely, we shall consider the problem%
\begin{equation}
-\Delta u=\sin u\ \text{ in }\mathbb{R}^{2},\text{ }\left\vert u\right\vert
<\pi. \label{SG}%
\end{equation}
This is a special case of the Allen-Cahn type equations
\begin{equation}
\Delta u=W^{\prime}\left(  u\right)  \ \text{ in }\mathbb{R}^{N}, \label{De}%
\end{equation}
where $W$ are double well potentials. The equation $\left(  \ref{SG}\right)  $
corresponds to $W \left(  u\right)  =1+\cos u.$ Note that if $W\left(
u\right)  =\frac{1}{4}\left(  u^{2}-1\right)  ^{2},$ then the corresponding
equation is the classical Allen-Cahn equation
\begin{equation}
-\Delta u = u-u^3 \ \text{ in }\mathbb{R}^{N},\text{ }\left\vert u\right\vert
<1. \label{AC}
\end{equation}

 Equation $\left(
\ref{SG}\right)  $ has a one dimensional \textquotedblleft
heteroclinic\textquotedblright\ solution
\[
H\left(  x\right)  =4\arctan e^{x}-\pi.
\]
It is monotone increasing. Translating and rotating it in the plane, we obtain
a family of one dimensional solutions. The celebrated De Giorgi conjecture
concerns the classification of monotone bounded solutions of the Allen-Cahn
type equation $\left(  \ref{De}\right)  $. Many works have been done towards a
complete resolution of this conjecture. We refer to
\cite{Ambro,M2,FMV,F1,F2,Gui,LWW,J,Savin} and the references therein for results in
this direction. We will study in this paper non-monotone solutions of $\left(
\ref{SG}\right)  $ in the plane.

Let us recall the following

\begin{definition} (\cite{Michal})
A solution $u$ of $\left(  \ref{SG}\right)  $ is called a $2n$-ended
solution, if outside a large ball, the nodal set of $u$ is asymptotic to
$2n$  half straight lines.
\end{definition}

These asymptotic lines are called ends of the solutions.  One can show that
actually along these half straight lines, the solution $u$ behaves like one
dimensional solution $H$ in the transverse direction. (See \cite{Michal}.) The set of $2n$-ended solution is denoted as  ${\mathcal M}_{2n}$. By a recent result of
Wang-Wei\cite{Wang}, a solution to $\left(  \ref{SG}\right)  $ is multiple-end
if and only if it is finite Morse index. In \cite{Manuel}, the infinite
dimensional Lyapunov-Schmidt reduction method has been used to construct a
family of $2n$-end solutions for the Allen-Cahn equation (\ref{AC}). The method can also
be applied to general double well potentials, including the elliptic
sine-Gordon equation$\left(  \ref{SG}\right)  $. The nodal sets of these
solutions are almost parallel, meaning that the angles between consecutive
ends are close to $0$ or $\pi.$ Actually, the nodal curves are given
approximately by a rescaled solutions of the Toda system. It is also known
that locally around each $2n$-end solution, the moduli space of $2n$-end
solutions has the structure of a real analytic variety. If the solution
happens to be nondegenerate, then locally around this solution, the moduli
space is indeed a $2n$-dimensional manifold\cite{Michal}. For general
nonlinearities, little is known for the structure of the moduli space of
$2n$-end solutions, except in the $n=2$ case. We now know\cite{K0,K1,K2} that
the space of $4$-end solutions is diffeomorphic to the open interval $\left(
0,1\right)  ,$ modulo translation and rotation(they give $3$-free parameters
in the moduli space). Based on these four-end solutions, an end-to-end
construction for $2n$-end solutions has been carried out in \cite{K3}. Roughly
speaking, these solutions are in certain sense lying near the boundary of the
moduli space.

The classification of ${\mathcal M}_{2n}$ is largely open for general nonlinearities.  Important open question include: are solutions in ${\mathcal M}_{2n}$ nondegenerate? Is ${\mathcal M}_{2n}$ connected? What is Morse index in $ {\mathcal M}_{2n}$? In a recent paper Mantoulidis \cite{Man}, a lower bound $ n-1$ on Morse index of ${\mathcal M}_{2n}$ is given. In this paper we give a complete answer to the above question in the case of the special elliptic sine-Gordon equation (\ref{SG}).

It is well known that the classical sine-Gordon equation $\left(
\ref{hs}\right)  $ is an integrable system. Methods from the theory of
integrable systems can be used to find solutions of this system. In
particular, it has soliton solutions. Note that $\left(  \ref{SG}\right)  $ is
elliptic, while $\left(  \ref{hs}\right)  $ is hyperbolic in nature. We find
in this paper that the Hirota direct method of integrable systems also gives
us real nonsingular solutions of $\left(  \ref{SG}\right)  .$ Let $U_{n}$ be
the functions defined by $\left(  \ref{Un}\right)  .$ Then $U_{n}-\pi$ are
solutions to $\left(  \ref{SG}\right)  ,$ they depends on $2n$ parameters,
$p_{j},\eta_{j}^{0},j=1,...,n.$ We are interested in the spectrum property of
these solutions. In this paper, we shall use B\"{a}cklund transformation to
prove the following

\begin{theorem}
\label{Main}The $2n$-end solutions $U_{n}-\pi$ of elliptic sine-Gordon
equation$\ \left(  \ref{SG}\right)  $ are $L^{\infty}$-nondegenerated in the
following sense: If $\phi$ is a bounded solution of the linearized equation
\[
-\Delta\phi+\phi\cos U_{n}=0.
\]
Then there exist constants $d_{j},j=1,...,n$, such that
\[
\phi=\sum_{j=1}^{n}\left(  d_{j}\partial_{\eta_{j}^{0}}U_{n}\right)  .
\]

\end{theorem}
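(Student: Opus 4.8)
The plan is to induct on the number $n$ of solitons, using the Bäcklund transformation to descend from the linearized operator at $U_n$ to the one at $U_{n-1}$. Throughout write $L_n\phi := -\Delta\phi + \phi\cos U_n$, so that the theorem asserts $\dim\ker_b L_n = n$ with $\ker_b L_n = \mathrm{span}\{\partial_{\eta_j^0}U_n : 1\le j\le n\}$, where $\ker_b$ denotes the space of bounded solutions. Note that $\partial_{\eta_j^0}U_n \in \ker_b L_n$ automatically, since translating the parameter $\eta_j^0$ produces a one-parameter family of solutions of $(\ref{SG})$, while the derivatives $\partial_{p_j}U_n$ are excluded precisely because they grow linearly along the ends whose directions they rotate. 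The base case $n=1$ is the rotated, translated heteroclinic $H$: decomposing $\phi$ via the Fourier transform in the variable along the single end reduces $L_1\phi=0$ to the family of ODEs $-\psi'' + (\xi^2 + \cos U_1)\psi = 0$; the mode $\xi=0$ gives exactly $H'\propto\partial_{\eta_1^0}U_1$ (the sign-definite ground state of $-\partial_x^2+\cos U_1$), whereas $\xi\ne 0$ raises the potential and admits no bounded solution, so $\ker_b L_1$ is one-dimensional.

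For the inductive step, recall that $(\ref{SG})$ carries a Bäcklund transformation $\mathcal B$ relating the $(n-1)$-soliton solution $U_{n-1}$ and the $n$-soliton solution $U_n$, realized in the complex coordinate $z=x+iy$ as a pair of first-order relations coupling $\sin\frac{U_n\pm U_{n-1}}{2}$ with the spectral parameter attached to the added soliton $(p_n,\eta_n^0)$. Linearizing these relations along the parameter family yields a first-order differential operator $\mathcal T = \mathcal T(U_n,U_{n-1})$, with coefficients built explicitly from $U_n$ and $U_{n-1}$, that intertwines the two linearized operators: $L_{n-1}\circ\mathcal T = \mathcal T\circ L_n$. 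Hence $\mathcal T$ carries $\ker L_n$ into $\ker L_{n-1}$. The first technical point is that $\mathcal T$ preserves boundedness, i.e.\ $\mathcal T\phi$ is bounded whenever $\phi$ is; this should follow from the known asymptotics of $U_n$ and $U_{n-1}$ along their ends together with the exponential localization of the coefficients of $\mathcal T$ transverse to the nodal lines.

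Granting this, $\mathcal T$ restricts to a linear map $\ker_b L_n \to \ker_b L_{n-1}$, and I would close the induction by identifying this map as part of a short exact sequence. Since $\mathcal B$ is equivariant under translating the shared parameters $\eta_j^0$ with $j\le n-1$, each generator satisfies $\mathcal T(\partial_{\eta_j^0}U_n) = c_j\,\partial_{\eta_j^0}U_{n-1}$ for a nonzero constant $c_j$, so by the induction hypothesis $\mathcal T$ maps onto $\ker_b L_{n-1}$; and because $U_{n-1}$ is independent of $\eta_n^0$, the added mode lies in the kernel, $\mathcal T(\partial_{\eta_n^0}U_n) = \partial_{\eta_n^0}U_{n-1} = 0$. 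Given any bounded $\phi\in\ker L_n$, the induction hypothesis writes $\mathcal T\phi = \sum_{j=1}^{n-1} b_j\,\partial_{\eta_j^0}U_{n-1}$; setting $\psi := \phi - \sum_{j=1}^{n-1}(b_j/c_j)\,\partial_{\eta_j^0}U_n$ gives $\mathcal T\psi = 0$ with $\psi\in\ker_b L_n$, whereupon it remains to show that $\ker\mathcal T\cap\ker_b L_n$ is exactly one-dimensional and spanned by $\partial_{\eta_n^0}U_n$. This forces $\phi\in\mathrm{span}\{\partial_{\eta_j^0}U_n : 1\le j\le n\}$ and completes the induction.

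The main obstacle is this final step: showing that $\ker\mathcal T\cap\ker_b L_n$ is precisely one-dimensional. Solving the homogeneous first-order equation $\mathcal T\psi = 0$ produces a solution through an explicit integrating factor assembled from the Bäcklund eigenfunction, but intersecting this one-parameter family with the bounded kernel of the second-order operator $L_n$ requires sharp control of that integrating factor simultaneously along all $2n$ ends and throughout the interaction region, and ruling out any second bounded direction. A secondary difficulty is making the intertwining identity $L_{n-1}\mathcal T = \mathcal T L_n$ rigorous directly from the Hirota tau-function formulas for $U_n$: one must verify that linearizing the Bäcklund relations truly gives a clean first-order intertwiner with no spurious lower-order terms, for which the permutability (Bianchi) theorem for superposed Bäcklund transformations should be the organizing tool.
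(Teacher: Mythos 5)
Your overall strategy---descending in soliton number via the B\"{a}cklund transformation and identifying the bounded kernel with the $\partial_{\eta_j^0}$-directions---is the same as the paper's, but the induction cannot be run in the form you set it up, because your central structural assumption fails for the \emph{elliptic} equation. You posit a B\"{a}cklund transformation relating the two regular real solutions $U_{n-1}$ and $U_n$. What actually exists (Lemma \ref{Back} and \eqref{uv}) is a transformation, with parameter $k_n=p_n+iq_n$ on the unit circle, relating $U_n$ to $v=4\arctan\frac{\tau}{\gamma}$, where $\tau/\gamma$ is \emph{purely imaginary}: $v$ is an $(n-1)$-soliton-type object (in fact of elliptic sinh-Gordon type) which is singular on the set $S=\left\{ \gamma^{2}+\tau^{2}=0\right\} $; it is not $U_{n-1}$. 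Consequently the inductive hypothesis (nondegeneracy of the regular $U_{n-1}$) is never applicable to the descended kernel, and your exact-sequence bookkeeping ($\mathcal{T}(\partial_{\eta_j^0}U_n)=c_j\,\partial_{\eta_j^0}U_{n-1}$, surjectivity onto $\ker_b L_{n-1}$) has no meaning at a singular target. The paper instead iterates the descent through a whole chain of such singular objects down to the vacuum $0$, where bounded solutions of $\Delta\varphi_0=\varphi_0$ vanish, and then reverses the chain. The unavoidable price---and the technical core of the paper, entirely absent from your proposal---is the analysis of the singular set: showing the integrating factor $\xi$ is well defined and continuous across $S$, that its vanishing order along components of $S$ is constant and positive (order one along unbounded components, with a deformation-of-parameters argument for possible bounded components), so that the descended kernel makes sense and remains bounded. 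Your appeal to ``exponential localization of the coefficients'' cannot see this difficulty at all.

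There are two further mismatches. First, the linearized B\"{a}cklund transformation here is not a single first-order intertwiner $\mathcal{T}$ satisfying $L_{n-1}\mathcal{T}=\mathcal{T}L_n$; it is the coupled system \eqref{LB}, $L\phi=M\eta$, $T\phi=N\eta$, with the given kernel $\eta$ at $U_n$ on one side and the unknown $\phi$ on the other. The descended kernel is produced by \emph{solving} this system via \eqref{fi}, $\phi=\xi\int_{-\infty}^{x}\xi^{-1}M\eta\,ds$, and the compatibility of the two equations (Lemma \ref{time}, Lemma \ref{A}) plays the role you wanted the intertwining identity and permutability theorem to play. Second, the obstacle you yourself single out as the main one---that $\ker\mathcal{T}\cap\ker_b L_n$ be exactly one-dimensional---is sidestepped in the paper by a different device: before descending, one subtracts from $\eta$ a suitable linear combination of the $2n$ parameter derivatives $\zeta_1,\ldots,\zeta_{2n}$ (with respect to $\operatorname{Re}k_j$ and $\eta_j^0$) so that $\eta$ decays exponentially as $x\rightarrow-\infty$; this decay is propagated down the chain, forces the function obtained at the vacuum to be zero, and reversing the transformations then gives that the normalized $\eta$ vanishes, i.e.\ $\eta\in\mathrm{span}\{\zeta_j\}$. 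Boundedness finally discards the linearly growing $\partial_{p_j}$-directions, leaving exactly $\mathrm{span}\{\partial_{\eta_j^0}U_n\}$. So the kernel-dimension count you were unable to supply is never needed; what is needed instead is the singular-set and decay analysis sketched above.
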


We remark that the nonlinear stability of 2-soliton solutions of the classical
hyperbolic sine-Gordon equation $\left(  \ref{hs}\right)  $ has been proved
recently by Munoz-Palacios \cite{Munoz}, also using B\"{a}cklund transformation. We refer
to the references therein for more discussion on the dynamical properties of
the hyperbolic sine-Gordon equation. For general background and applications
of B\"{a}cklund transformation, we refer to \cite{Rogers,Rogers82}.

The Morse index of $U_{n}-\pi$ is by definition the number of negative
eigenvalues of the operator $-\Delta-\cos U_{n},$ in the space $H^{2}\left(
\mathbb{R}^{2}\right)  ,$ counted with multiplicity. The Morse index can also
be defined as the maximal dimension of the subspace of compactly supported
smooth functions where the associated quadratic form of the energy functional
is negative. Our next result is

\begin{theorem}
\label{Main2}The Morse index of $U_{n}-\pi$ is equal to $\frac{n\left(
n-1\right)  }{2}.$ Moreover, all the finite Morse index solutions of $\left(
\ref{SG}\right)  $ are of the form $U_{n}-\pi,$ with suitable choice of the
parameters $p_{j},q_{j},\eta_{j}^{0}.$
\end{theorem}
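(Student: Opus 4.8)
The plan is to prove Theorem 2.3 (labeled \texttt{Main2}) in two independent pieces: the computation of the Morse index, and the classification statement that every finite Morse index solution is one of the $U_n-\pi$. I would organize these as two separate arguments, both leaning heavily on the nondegeneracy result of Theorem 2.2 (\texttt{Main}) and on the integrable structure behind the $U_n$.

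\emph{Computing the Morse index.} The goal is to show the operator $L_n=-\Delta-\cos U_n$ has exactly $\tfrac{n(n-1)}{2}$ negative eigenvalues in $H^2(\mathbb{R}^2)$. My approach is to exploit the B\"acklund transformation that connects $U_n$ and $U_{n-1}$, which should realize $L_n$ and $L_{n-1}$ as (essentially) intertwined operators, analogous to the factorization $L=A^*A$ and its superpartner $AA^*$ familiar from the theory of integrable systems and supersymmetric quantum mechanics. In dimension one such Darboux/B\"acklund factorizations shift the spectrum in a controlled way, adding or removing a fixed number of negative modes. First I would set up the factorization of $L_n$ coming from the B\"acklund transformation, identify precisely how many negative eigenvalues are gained in passing from $U_{n-1}$ to $U_n$, and then induct on $n$: the base case $n=1$ is the one-dimensional heteroclinic $H$, whose linearized operator $-\Delta-\cos H$ is nonnegative (Morse index $0$), consistent with $\tfrac{1\cdot 0}{2}=0$. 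If each B\"acklund step contributes exactly $n-1$ new negative directions, then summing $\sum_{k=1}^{n}(k-1)=\tfrac{n(n-1)}{2}$ gives the claimed index. An alternative, and possibly cleaner, route is to compute the quadratic form directly on the explicit $\partial_{\eta_j^0}U_n$ and related kernel/near-kernel elements, diagonalizing the restriction of $L_n$ to the finite-dimensional space spanned by the natural deformation directions; Theorem 2.2 guarantees this space captures the bounded kernel exactly, so the negative spectrum should be detectable there.

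\emph{Classification.} For the second assertion, I would combine three ingredients. By the cited Wang--Wei result, any finite Morse index solution is a $2n$-ended solution, i.e.\ lies in $\mathcal{M}_{2n}$. By the moduli space theory recalled in the introduction (\cite{Michal}), $\mathcal{M}_{2n}$ is locally a real analytic variety, and at a nondegenerate solution it is a genuine $2n$-dimensional manifold. Theorem 2.2 supplies exactly this nondegeneracy for the family $U_n-\pi$: the only bounded kernel elements are the $n$ translations $\partial_{\eta_j^0}U_n$, and together with the $n$ remaining parameters $p_j$ (plus the rigid motions) one accounts for a full $2n$-dimensional tangent space. Hence the explicit family forms an open subset of $\mathcal{M}_{2n}$. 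The crux is then to upgrade ``open'' to ``all of $\mathcal{M}_{2n}$'': I would argue that the family $U_n-\pi$ is also closed in $\mathcal{M}_{2n}$ (via a compactness/limit argument showing a sequence of such solutions converges, after extracting, to another member of the family, using control on the ends), so that by connectedness of the relevant component it exhausts $\mathcal{M}_{2n}$.

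The step I expect to be the main obstacle is making the Morse-index count rigorous, specifically quantifying the spectral shift under a single B\"acklund transformation in the genuinely two-dimensional setting. The clean spectral-shift statements for Darboux transformations are classical in one dimension, but here $L_n$ acts on $\mathbb{R}^2$ and the B\"acklund transformation is a nonlinear operation on solutions whose linearization must be shown to intertwine the two operators with a kernel/cokernel of exactly the expected finite dimension. Controlling which transformed eigenfunctions remain $L^2$, and proving no spurious negative eigenvalues appear or disappear, is the delicate part; Theorem 2.2 will be essential here, since the nondegeneracy pins down the threshold behavior at the bottom of the essential spectrum and prevents the count from being corrupted by resonances or embedded modes at the spectral edge.
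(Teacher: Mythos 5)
Both halves of your proposal have genuine gaps, and in both cases the paper takes a different route. The decisive problem is in your classification argument: your open-plus-closed strategy can only show that the explicit family $U_{n}-\pi$ exhausts the connected component of $\mathcal{M}_{2n}$ that it meets, so it needs connectedness of $\mathcal{M}_{2n}$ as an input. But connectedness of $\mathcal{M}_{2n}$ is exactly one of the open questions listed in the introduction, and in the paper it is a \emph{consequence} of the classification, not a hypothesis --- so your argument is circular. The paper instead proves the classification directly by inverse scattering (following \cite{Gut}): it writes down the Lax pair \eqref{Lax1}--\eqref{Lax2}, observes that for a bounded multiple-end solution the reflection coefficient $b(\lambda,y)$ must vanish for real $\lambda\neq 0$ (otherwise it grows exponentially in $y$), chooses parameters so that $U_{n}$ has the same asymptotic half-lines as the given solution $u$ as $y\to+\infty$, concludes that $u$ and $U_{n}$ are reflectionless potentials with identical scattering data, and hence $u=U_{n}$. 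No moduli-space topology is needed.

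Your Morse index computation is also not salvageable as stated. The B\"{a}cklund/Darboux spectral-shift induction is precisely the step you admit you cannot quantify, and nothing in the paper (or in the 1D theory) supplies the claim that one transformation step in $\mathbb{R}^{2}$ creates exactly $n-1$ new $L^{2}$ negative modes; the paper never argues this way. Your ``cleaner'' alternative is simply wrong: the functions $\partial_{\eta_{j}^{0}}U_{n}$ lie in the kernel of $-\Delta+\cos U_{n}$, so the quadratic form vanishes identically on their span and cannot detect any negative spectrum. What the paper actually does (Proposition \ref{Morse}) is to use the classification first: since all $2n$-end solutions form one connected family and are $L^{\infty}$-nondegenerate by Theorem \ref{Main}, the Morse index is constant on $\mathcal{M}_{2n}$, so it suffices to compute it for one convenient member. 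It takes a solution produced by the end-to-end construction of \cite{K3}, obtained by gluing $n(n-1)/2$ widely separated four-end solutions, each of which has Morse index $1$ by \cite{Gui2}. Gluing the corresponding negative eigenfunctions gives the lower bound $n(n-1)/2$, and a compactness/contradiction argument (splitting into the case where mass of a putative extra eigenfunction escapes from all the four-end centers, and the case where it concentrates near one of them) rules out any additional negative eigenvalue. Note also that this makes the two halves of Theorem \ref{Main2} logically ordered --- the index computation depends on the classification --- whereas your proposal treats them as independent.
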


The classification result stated in this theorem follows from a direct
application of the inverse scattering transform studied in \cite{Gut}. Inverse
scattering transform for elliptic sine-Gordon equation has also been used in
\cite{Bori1,Borisov} to study solutions with periodic asymptotic behavior or
vortex type singularities. Note that certain class of vortex type solutions
were analyzed through B\"{a}cklund transformation or direct method in
\cite{Hudak,Leibbrandt,Nakamura,Takeno}, and finite energy solutions with
point-like singularities have been studied in \cite{Sha}. It is also worth
mentioning that more recently, some classes of quite involved boundary value
problems of the elliptic sine-Gordon equation have been investigated via Fokas
direct method in \cite{P2,P4,P1,P3}.

Theorem \ref{Main2} implies that in the special case $n=2,$ the four-end
solutions of the equation $\left(  \ref{SG}\right)  $ have Morse index one. In
the family of four-end solutions, there is a special one, called saddle
solution(see $\left(  \ref{saddle}\right)  $), explicitly given by%
\[
4\arctan\left(  \frac{\cosh\left(  \frac{y}{\sqrt{2}}\right)  }{\cosh\left(
\frac{x}{\sqrt{2}}\right)  }\right)  -\pi.
\]
The nodal set of this solution consists of two orthogonally intersected
straight lines, hence the name saddle solution. Saddle-shaped solutions of
Allen-Cahn type equation $\Delta u=F^{\prime}\left(  u\right)  $ in
$\mathbb{R}^{2k}$ with $k\geq2$ has been studied by Cabre and Terra in a
series of papers. In \cite{Ca1,Ca2} it is proved that in $\mathbb{R}^{4}$ and
$\mathbb{R}^{6},$ the saddle-shaped solution is unstable, while in
$\mathbb{R}^{2k}$ with $k\geq7,$ they are stable\cite{Ca3}. It is also
conjectured in \cite{Ca3} that for $k\geq4,$ the saddle-shaped solutions
should be a global minimizer of the energy functional. However, for $F\left(
u\right)  =1+\cos u,$ the generalized elliptic sine-Gordon equation in even
dimension higher than two is believed to be non-integrable, hence no explicit
formulas are available for these saddle-shaped solutions. We expect that our
nondegeneracy results in this paper will be useful in the construction of
solutions of the generalized elliptic sine-Gordon equation in higher dimensions.

We also stress that $W\left(  u\right)  =1+\cos u$ is essentially the only
double well potential such that the corresponding equation is
integrable\cite{Dodd}. It is also worth pointing out that the sine
nonlinearity also appears in the Pierls-Nabarro equation whose solutions have been classified in \cite{Toland}. In dimension two, a
classification result like Theorem \ref{Main2} for general double well
potentials could be very difficult.

Finally we mention that recently there have been many interesting  studies on the use of Allen-Cahn type equation in constructing minimal surfaces. We refer to \cite{CM}, \cite{Ga1}, \cite{Ga2}, \cite{Gua}, \cite{Man}  and the references therein.

$\mathbf{Acknowledgement}$ The research of J. Wei is partially supported by
NSERC of Canada. Part of this work was finished while the first author was
visiting the University of British Columbia in 2017. He thanks the institute
for the financial support.

\section{Soliton solutions of the hyperbolic sine-Gordon equation}

In this section, we consider the classical sine-Gordon equation
\begin{equation}
\partial_{x}^{2}\phi-\partial_{z}^{2}\phi=\sin\phi. \label{csg}%
\end{equation}
Hereafter, we shall call it hyperbolic sine-Gordon equation. It is well known
that this equation has soliton solutions. Let us first recall the explicit
$n$-soliton solutions of $\left(  \ref{csg}\right)  $ in the form obtained in
\cite{Hirota} using Hirota direct method. We also refer to
\cite{Gibbon,Hirota2,Hirota3,Waz} for related results on soliton solutions.

Let $P_{j},Q_{j}$ be complex numbers with $P_{j}^{2}-Q_{j}^{2}=1.$ Define%
\begin{equation}
\alpha\left(  j,k\right)  =\frac{\left(  P_{j}-P_{k}\right)  ^{2}-\left(
Q_{j}-Q_{k}\right)  ^{2}}{\left(  P_{j}+P_{k}\right)  ^{2}-\left(  Q_{j}%
+Q_{k}\right)  ^{2}}. \label{alf}%
\end{equation}
Note that $\alpha\left(  j,k\right)  =\alpha\left(  k,j\right)  .$ Since
\[
P_{j}-Q_{j}=\frac{1}{P_{j}+Q_{j}},
\]
we can also rewrite $\alpha$ in the form
\begin{align*}
\alpha\left(  j,k\right)   &  =\frac{\left(  P_{j}-P_{k}+Q_{j}-Q_{k}\right)
\left(  \frac{1}{P_{j}+Q_{j}}-\frac{1}{P_{k}+Q_{k}}\right)  }{\left(
P_{j}+P_{k}+Q_{j}+Q_{k}\right)  \left(  \frac{1}{P_{j}+Q_{j}}+\frac{1}%
{P_{k}+Q_{k}}\right)  }\\
&  =-\frac{\left(  P_{j}-P_{k}+Q_{j}-Q_{k}\right)  ^{2}}{\left(  P_{j}%
+P_{k}+Q_{j}+Q_{k}\right)  ^{2}}.
\end{align*}
We also define $a$ by
\[
a\left(  i_{1},i_{2},...,i_{n}\right)  =1,\text{ if }n=0\text{,}1,
\]%
\[
a\left(  i_{1},i_{2},...,i_{n}\right)  =\prod\limits_{k<l}\alpha\left(
i_{k},i_{l}\right)  ,\text{ if }n\geq2.
\]
Let us introduce the notation $\eta_{i}=P_{i}x-Q_{i}z-\eta_{i}^{0},$ where
$\eta_{i}^{0}$ is a complex constant.

It has been proved in \cite{Hirota} that equation $\left(  \ref{csg}\right)  $
has families of $n$-soliton solutions of the form
\begin{equation}
\phi=4\arctan\frac{g}{f}, \label{Hirota}%
\end{equation}
where the functions $f,g$ are explicitly given by
\begin{equation}
f=\sum_{k=0}^{\left[  n/2\right]  }\left(  \sum\limits_{\left\{  n,2k\right\}
}\left[  a\left(  i_{1},...,i_{2k}\right)  \exp\left(  \eta_{i_{1}}%
+...+\eta_{i_{2k}}\right)  \right]  \right)  , \label{f}%
\end{equation}%
\begin{equation}
g=\sum_{k=0}^{\left[  \left(  n-1\right)  /2\right]  }\left(  \sum
\limits_{\left\{  n,2k+1\right\}  }\left[  a\left(  i_{1},...,i_{2k+1}\right)
\exp\left(  \eta_{i_{1}}+...+\eta_{i_{2k+1}}\right)  \right]  \right)  .
\label{g}%
\end{equation}
Here the notation $\sum\limits_{\left\{  n,k\right\}  }$ means taking sum over
all possible $k$ different integers $i_{1},...,i_{k}$ from the set of integers
$\left\{  1,...,n\right\}  .$ It is worth mentioning that these solutions can
also be written in the Wronskian form(\cite{Nimmo}). Here we choose to use the
form $\left(  \ref{f}\right)  ,\left(  \ref{g}\right)  $, because it is more
convenient to check the positive condition of the function. This will be clear
when we are dealing with the elliptic version of the sine-Gordon equation.
Note that in the special case $n=3,$ we have
\begin{align*}
f  &  =\sum_{n=0}^{1}\left(  \sum\limits_{\left\{  3,2n\right\}  }a\left(
i_{1},...,i_{2n}\right)  \exp\left(  \eta_{i_{1}}+...+\eta_{i_{2n}}\right)
\right) \\
&  =1+a\left(  1,2\right)  \exp\left(  \eta_{1}+\eta_{2}\right)  +a\left(
1,3\right)  \exp\left(  \eta_{1}+\eta_{3}\right)  +a\left(  2,3\right)
\exp\left(  \eta_{2}+\eta_{3}\right) \\
&  =1+\alpha\left(  1,2\right)  \exp\left(  \eta_{1}+\eta_{2}\right)
+\alpha\left(  1,3\right)  \exp\left(  \eta_{1}+\eta_{3}\right)
+\alpha\left(  2,3\right)  \exp\left(  \eta_{2}+\eta_{3}\right)  ,
\end{align*}%
\begin{align*}
g  &  =\sum_{k=0}^{1}\left(  \sum\limits_{\left\{  3,2k+1\right\}  }a\left(
j_{1},...,j_{2k+1}\right)  \exp\left(  \eta_{i_{1}}+...+\eta_{i_{2k+1}%
}\right)  \right) \\
&  =\exp\left(  \eta_{1}\right)  +\exp\left(  \eta_{2}\right)  +\exp\left(
\eta_{3}\right)  +a\left(  1,2,3\right)  \exp\left(  \eta_{1}+\eta_{2}%
+\eta_{3}\right) \\
&  =\exp\left(  \eta_{1}\right)  +\exp\left(  \eta_{2}\right)  +\exp\left(
\eta_{3}\right)  +\alpha\left(  1,2\right)  \alpha\left(  1,3\right)
\alpha\left(  2,3\right)  \exp\left(  \eta_{1}+\eta_{2}+\eta_{3}\right)  .
\end{align*}

\subsection{B\"{a}cklund transform and bilinear form of the hyperbolic
sine-Gordon equation}

Lamb\cite{Lamb} has established a superposition formula for the B\"{a}cklund
transformation of the hyperbolic sine-Gordon equation. In particular, this
formula enables us to get multi-soliton solutions in an algebraic way.
However, in this formulation, for $n$-soliton solutions with $n$ large, it
will be quite tedious to write down the explicit expressions for the
solutions. Nevertheless, it turns out that the soliton solutions $\left(
\ref{Hirota}\right)  $ discussed above can be obtained through B\"{a}cklund
transformation. This will be discussed in more details in this section.

In the light-cone coordinate, the hyperbolic sine-Gordon equation has the
form
\begin{equation}
u_{st}=\sin u,\left(  s,t\right)  \in\mathbb{R}^{2}. \label{sg}%
\end{equation}
Let $\beta$ be a parameter. The B\"{a}cklund transformation between two
solutions $f$ and $g$ of $\left(  \ref{sg}\right)  $ is given by(see for
instance \cite{Rogers82}):
\begin{equation}
\left\{
\begin{array}
[c]{l}%
\partial_{s}f=\partial_{s}g+2\beta\sin\frac{f+g}{2},\\
\partial_{t}f=-\partial_{t}g+2\beta^{-1}\sin\frac{f-g}{2}.
\end{array}
\right.  \label{Backlund}%
\end{equation}

Next we recall the bilinear form of the hyperbolic sine-Gordon
equation(\cite{Hirota4}). Let $F=f+ig$ be a complex function, where $i$ is the
complex unit and $f,g$ are real valued functions. The complex conjugate of $F$
will be denoted by $\bar{F}.$ Now we write $u$ as
\[
u=2i\ln\frac{\bar{F}}{F}=4\arctan\frac{g}{f}.
\]
Note that
\[
\sin u=\frac{e^{iu}-e^{-iu}}{2i}=\frac{1}{2i}\left(  \frac{F^{2}}{\bar{F}^{2}%
}-\frac{\bar{F}^{2}}{F^{2}}\right)  .
\]
We use $D$ to denote the bilinear operator(\cite{Hirota4}). Then $\left(
\ref{sg}\right)  $ has the bilinear form
\[
D_{s}D_{t}F\cdot F=\frac{1}{2}\left(  F^{2}-\bar{F}^{2}\right)  .
\]
The B\"{a}cklund transformation can be written in the following bilinear
form(see \cite{Hirota4}):%
\begin{equation}
\left\{
\begin{array}
[c]{c}%
D_{s}G\cdot F=\frac{h}{2}\bar{G}\bar{F},\\
D_{t}G\cdot\bar{F}=\frac{1}{2h}\bar{G}F.
\end{array}
\right.  \label{bilinear}%
\end{equation}
If $u=2i\ln\frac{\bar{F}}{F},v=2i\ln\frac{\bar{G}}{G}$ satisfy $\left(
\ref{bilinear}\right)  ,$ then they also satisfy $\left(  \ref{Backlund}%
\right)  .$

Let us fix an $n\in\mathbb{N}.$ The $n$-soliton solutions discussed in the
previous section are indeed B\"{a}cklund transformation of certain $n-1$
soliton type solutions. To see this, we write the solutions in another form.
For $j=1,...,n,$ let $k_{j}=P_{j}+Q_{j}$ and define $\xi_{j}$ by
\[
e^{\xi_{j}}=\prod\limits_{l<j}\frac{k_{l}+k_{j}}{k_{l}-k_{j}}\prod
\limits_{l>j}\frac{k_{j}+k_{l}}{k_{j}-k_{l}}.
\]
Since $P_{j}^{2}-Q_{j}^{2}=1,$ we know that $k_{j}^{-1}=P_{j}-Q_{j}.$ Let us
now define $\tilde{\eta}_{j}=\eta_{j}-\xi_{j},j=1,...,n.$ It can be written as
$\tilde{\eta}_{j}=P_{j}x+Q_{j}z+\tilde{\eta}_{j,0},$ with $\tilde{\eta}%
_{j,0}=\eta_{j,0}-\xi_{j}.$

With these notations, the function $f_{n}$ can be rewritten as
\begin{align*}
&  \sum_{k=0}^{\left[  n/2\right]  }\left(  \sum\limits_{\left\{
n,2k\right\}  }a\left(  i_{1},...,i_{2k}\right)  \exp\left(  \xi_{i_{1}%
}+...+\xi_{i_{2k}}\right)  \exp\left(  \tilde{\eta}_{i_{1}}+...+\tilde{\eta
}_{i_{2k}}\right)  \right) \\
&  =\exp\left(  \frac{1}{2}\left(  \tilde{\eta}_{1}+...+\tilde{\eta}%
_{n}\right)  \right)  \tilde{f}_{n}\prod\limits_{l<j\leq n}\frac{1}%
{k_{l}-k_{j}},
\end{align*}
where the function $\tilde{f}_{n}$ is defined to be
\begin{equation}
\sum\limits_{\prod\limits_{k=1}^{n}\varepsilon_{k}=\left(  -1\right)  ^{n}%
}\left(  \exp\left(  \sum\limits_{j=1}^{n}\frac{\varepsilon_{j}}{2}\left(
\tilde{\eta}_{j}+\frac{\pi i}{2}\right)  +\frac{n\pi i}{4}\right)
\prod\limits_{m<j\leq n}\left(  k_{m}-\varepsilon_{m}\varepsilon_{j}%
k_{j}\right)  \right)  , \label{tiltaf}%
\end{equation}
and $\varepsilon_{j}$ are indices equal $+1$ or $-1.$ Similarly, we can write
\[
g_{n}=\exp\left(  \frac{1}{2}\left(  \tilde{\eta}_{1}+...+\tilde{\eta}%
_{n}\right)  \right)  \tilde{g}_{n}\prod\limits_{l<j\leq n}\frac{1}%
{k_{l}-k_{j}},
\]
with%
\begin{equation}
\tilde{g}_{n}=\sum\limits_{\prod\limits_{k=1}^{n}\varepsilon_{k}=\left(
-1\right)  ^{n+1}}\left(  \exp\left(  \sum\limits_{j=1}^{n}\frac
{\varepsilon_{j}}{2}\left(  \tilde{\eta}_{j}+\frac{\pi i}{2}\right)
+\frac{\left(  n-2\right)  \pi i}{4}\right)  \prod\limits_{m<j\leq n}\left(
k_{m}-\varepsilon_{m}\varepsilon_{j}k_{j}\right)  \right)  . \label{tiltag}%
\end{equation}
We see that the $n$-soliton solution $\left(  \ref{Hirota}\right)  $ of the
hyperbolic sine-Gordon equation also equals $4\arctan\frac{\tilde{g}_{n}%
}{\tilde{f}_{n}}.$

We next would like to consider an $n-1$-soliton solutions closely related to
$4\arctan\frac{\tilde{g}_{n}}{\tilde{f}_{n}}.$ More precisely, we define
\[
\gamma=\sum\limits_{\prod\limits_{k=1}^{n-1}\varepsilon_{k}=\left(  -1\right)
^{n-1}}\left(  \exp\left(  \sum\limits_{j=1}^{n-1}\frac{\varepsilon_{j}}%
{2}\left(  \tilde{\eta}_{j}+\frac{\pi i}{2}\right)  +\frac{\left(  n-1\right)
\pi i}{4}\right)  \prod\limits_{m<j\leq n-1}\left(  k_{m}-\varepsilon
_{m}\varepsilon_{j}k_{j}\right)  \right)  ,
\]%
\[
\tau=\sum\limits_{\prod\limits_{k=1}^{n-1}\varepsilon_{k}=\left(  -1\right)
^{n}}\left(  \exp\left(  \sum\limits_{j=1}^{n-1}\frac{\varepsilon_{j}}%
{2}\left(  \tilde{\eta}_{j}+\frac{\pi i}{2}\right)  +\frac{\left(  n-3\right)
\pi i}{4}\right)  \prod\limits_{m<j\leq n-1}\left(  k_{m}-\varepsilon
_{m}\varepsilon_{j}k_{j}\right)  \right)  .
\]
Let $x=s+t,z=s-t.$ We have following B\"{a}cklund transformation.

\begin{lemma}
\label{Back}Let $F=\gamma+i\tau,$ $G=\tilde{f}_{n}+i\tilde{g}_{n}.$ Suppose
$P_{j},Q_{j},\tilde{\eta}_{j,0},j=1,...,n,$ are real numbers. Then
\[
\left\{
\begin{array}
[c]{c}%
D_{s}G\cdot F=-\frac{1}{2k_{n}}\bar{G}\bar{F},\\
D_{t}G\cdot\bar{F}=-\frac{k_{n}}{2}\bar{G}F.
\end{array}
\right.
\]

\end{lemma}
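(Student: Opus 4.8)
The plan is to verify the two bilinear identities directly, by expanding $F$ and $G$ into sums of pure exponentials and matching the coefficient of each exponential monomial on the two sides. The first step is to observe that the factor $i=e^{\pi i/2}$ exactly reconciles the phase offsets $\tfrac{n\pi i}{4}$ and $\tfrac{(n-2)\pi i}{4}$ in $\tilde{f}_{n}$ and $\tilde{g}_{n}$, while the parity constraints $\prod\varepsilon_{k}=(-1)^{n}$ and $\prod\varepsilon_{k}=(-1)^{n+1}$ are complementary. Hence $G=\tilde{f}_{n}+i\tilde{g}_{n}$ collapses into a single sum over all sign vectors,
\[
G=\sum_{\mu\in\{\pm1\}^{n}}A_{\mu}\exp\Big(\sum_{j=1}^{n}\tfrac{\mu_{j}}{2}\tilde{\eta}_{j}\Big),\qquad A_{\mu}=e^{\frac{\pi i}{4}(\sum_{j}\mu_{j}+n)}\prod_{m<j\le n}(k_{m}-\mu_{m}\mu_{j}k_{j}),
\]
and likewise $F=\gamma+i\tau=\sum_{\nu\in\{\pm1\}^{n-1}}B_{\nu}\exp(\sum_{j<n}\tfrac{\nu_{j}}{2}\tilde{\eta}_{j})$ with $B_{\nu}=e^{\frac{\pi i}{4}(\sum_{j<n}\nu_{j}+n-1)}\prod_{m<j\le n-1}(k_{m}-\nu_{m}\nu_{j}k_{j})$. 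Since $P_{j},Q_{j},\tilde{\eta}_{j,0}$ are real, each $\tilde{\eta}_{j}$ is real, so $\bar{G}$ and $\bar{F}$ are obtained simply by conjugating $A_{\mu},B_{\nu}$; the products of the $k$'s are real and unchanged.

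In the coordinates $x=s+t,\ z=s-t$ one has $\tilde{\eta}_{j}=k_{j}s+k_{j}^{-1}t+\tilde{\eta}_{j,0}$, hence $\partial_{s}\tilde{\eta}_{j}=k_{j}$ and $\partial_{t}\tilde{\eta}_{j}=k_{j}^{-1}$. Applying the elementary rule $D_{s}\,e^{\eta}\cdot e^{\eta'}=(\partial_{s}\eta-\partial_{s}\eta')e^{\eta+\eta'}$ term by term gives
\[
D_{s}G\cdot F=\tfrac12\sum_{\mu,\nu}A_{\mu}B_{\nu}\Big(\sum_{j=1}^{n}\mu_{j}k_{j}-\sum_{j=1}^{n-1}\nu_{j}k_{j}\Big)\exp\Big(\sum_{j<n}\tfrac{\mu_{j}+\nu_{j}}{2}\tilde{\eta}_{j}+\tfrac{\mu_{n}}{2}\tilde{\eta}_{n}\Big),
\]
while the right-hand side $-\tfrac{1}{2k_{n}}\bar{G}\bar{F}$ expands into $-\tfrac{1}{2k_{n}}\sum_{\mu,\nu}\bar{A}_{\mu}\bar{B}_{\nu}$ times the identical exponentials. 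Because the $\tilde{\eta}_{j}$ are, for generic $k_{j}$, independent linear forms in $(s,t)$, it suffices to match the coefficient of each monomial $\exp(\sum_{j<n}\tfrac{m_{j}}{2}\tilde{\eta}_{j}+\tfrac{m_{n}}{2}\tilde{\eta}_{n})$; coincidences among the frequencies are absorbed by analytic continuation in the parameters. For a fixed target exponent $m$, the pairs $(\mu,\nu)$ contributing to it are rigidly fixed at indices $j<n$ with $m_{j}=\pm2$ (forcing $\mu_{j}=\nu_{j}=\pm1$) and at $j=n$ (forcing $\mu_{n}=m_{n}$), while at each free index with $m_{j}=0$ there is a two-fold choice $(\mu_{j},\nu_{j})=(1,-1)$ or $(-1,1)$.

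A convenient simplification is that the combined phase $\theta_{\mu}+\theta_{\nu}$ depends only on $m$, since each free index contributes $\mu_{j}+\nu_{j}=0$; this phase (one checks it lands in $\tfrac{\pi}{2}\mathbb{Z}$) factors out of the sum, and coefficient matching reduces to a purely real identity among the products $R_{\mu}=\prod_{m<j\le n}(k_{m}-\mu_{m}\mu_{j}k_{j})$ and $R_{\nu}$. The heart of the proof is then the algebraic identity, for each target $m$,
\[
\sum_{(\mu,\nu)\to m}R_{\mu}R_{\nu}\Big(\sum_{j=1}^{n}\mu_{j}k_{j}-\sum_{j=1}^{n-1}\nu_{j}k_{j}\Big)=-\frac{1}{k_{n}}\sum_{(\mu,\nu)\to m}R_{\mu}R_{\nu}.
\]
The products in $A_{\mu}$ contain all factors coupling the $n$-th soliton to the others, namely $\prod_{m<n}(k_{m}-\mu_{m}\mu_{n}k_{n})$, which are absent from $B_{\nu}$; tracking how these combine with the linear multiplier is precisely what produces the prefactor $k_{n}^{-1}$. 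I expect this reduction to be the main obstacle: one must exploit how the factors $k_{m}-\varepsilon_{m}\varepsilon_{j}k_{j}$ transform when a free pair $(\mu_{j},\nu_{j})$ is flipped, so that summing over the free indices collapses the bulk terms and leaves only a boundary contribution involving $k_{n}$ with the correct sign. The second identity $D_{t}G\cdot\bar{F}=-\tfrac{k_{n}}{2}\bar{G}F$ follows from the same computation, using $\partial_{t}\tilde{\eta}_{j}=k_{j}^{-1}$ in place of $\partial_{s}\tilde{\eta}_{j}=k_{j}$, which replaces every $k_{j}$ by $k_{j}^{-1}$ and hence $-\tfrac{1}{2k_{n}}$ by $-\tfrac{k_{n}}{2}$.
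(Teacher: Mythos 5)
Your setup is sound, and in one respect cleaner than the paper's: absorbing $i=e^{\pi i/2}$ into the phases so that $G=\tilde f_n+i\tilde g_n$ and $F=\gamma+i\tau$ collapse into single sums over all sign vectors $\mu\in\{\pm1\}^n$, $\nu\in\{\pm1\}^{n-1}$ is correct, and reducing the lemma to coefficient matching on exponential monomials (for generic $k_j$, then analytic continuation) is legitimate; the paper instead keeps real and imaginary parts $\tilde f_n,\tilde g_n,\gamma,\tau$ separate. But your argument stops exactly where the lemma lives. The displayed ``heart of the proof'' identity over pairs $(\mu,\nu)\to m$ is never proved; you explicitly call it ``the main obstacle'' and only gesture at how flipping a free pair $(\mu_j,\nu_j)$ should collapse the sum. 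That cancellation \emph{is} the content of the lemma, and it is what the paper's proof actually carries out: it pairs each term $I$ of $\gamma\partial_s\tilde f_n-\tau\partial_s\tilde g_n$ with a dual term $I^{\ast}$ of $\tilde f_n\partial_s\gamma-\tilde g_n\partial_s\tau$ (grouping according to the sign $\varepsilon_n$ attached to $\tfrac12\tilde\eta_n$), and shows that the difference of the linear multipliers makes $I-I^{\ast}$ reproduce precisely the corresponding terms of $-\tfrac{k_n^{-1}}{2}\bigl(\tilde f_n\gamma-\tilde g_n\tau\bigr)$. Without some version of this pairing computation you have reformulated the lemma, not proved it.

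Worse, the identity as you wrote it is false, so the plan cannot be completed in its present form. With the paper's definitions ($\eta_i=P_ix-Q_iz-\eta_i^0$, $x=s+t$, $z=s-t$) one gets $\partial_s\tilde\eta_j=P_j-Q_j=k_j^{-1}$ and $\partial_t\tilde\eta_j=k_j$; you have the two rules interchanged (the paper's rewriting of $\tilde\eta_j$ with $+Q_jz$ is a typo, as its own proof, which uses multipliers $k_j^{-1}$ for $\partial_s$, confirms). The mismatch shows up by homogeneity: your left-hand side is homogeneous of degree $\binom{n}{2}+\binom{n-1}{2}+1$ in $(k_1,\dots,k_n)$, while the right-hand side has degree $\binom{n}{2}+\binom{n-1}{2}-1$, so they cannot agree identically. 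Concretely, for a target monomial with no free index ($\mu_j=\nu_j$ for all $j<n$) exactly one pair contributes, and your identity degenerates to $\pm k_n=\mp k_n^{-1}$. With your derivative rules you would at best obtain the Bäcklund relation with parameter $-k_n$ in place of $-k_n^{-1}$, i.e.\ with the prefactors $-\tfrac{1}{2k_n}$ and $-\tfrac{k_n}{2}$ interchanged, which is not the stated lemma. A smaller defect of the same kind: the phase you factor out satisfies $2\theta\in\pi\mathbb{Z}$, so the reduced identity carries an $m$-dependent sign $e^{-2i\theta}=\pm1$ rather than the bare $-1/k_n$ you wrote; checking that this sign comes out right (it does, by the parity fact $\sum_{j<n}\nu_j\equiv n-1 \pmod 2$) is part of the work you would still need to do.
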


\begin{proof}
We sketch the proof of this fact for completeness. We only prove the first
identity, the idea for the proof of the second one is similar.

We compute
\begin{align*}
D_{s}G\cdot F  &  =F\partial_{s}G-G\partial_{s}F\\
&  =\left(  \partial_{s}\tilde{f}_{n}+i\partial_{s}\tilde{g}_{n}\right)
\left(  \gamma+\tau i\right)  -\left(  \tilde{f}_{n}+i\tilde{g}_{n}\right)
\left(  \partial_{s}\gamma+i\partial_{s}\tau\right) \\
&  =\gamma\partial_{s}\tilde{f}_{n}-\tau\partial_{s}\tilde{g}_{n}-\left(
\tilde{f}_{n}\partial_{s}\gamma-\tilde{g}_{n}\partial_{s}\tau\right) \\
&  +\left[  \left(  \tau\partial_{s}\tilde{f}_{n}+\gamma\partial_{s}\tilde
{g}_{n}\right)  -\left(  \tilde{f}_{n}\partial_{s}\tau+\tilde{g}_{n}%
\partial_{s}\gamma\right)  \right]  i.
\end{align*}
On the other hand,
\begin{align*}
\bar{G}\bar{F}  &  =\left(  \tilde{f}_{n}-i\tilde{g}_{n}\right)  \left(
\gamma-\tau i\right) \\
&  =\tilde{f}_{n}\gamma-\tilde{g}_{n}\tau-i\left(  \tilde{f}_{n}\tau+\tilde
{g}_{n}\gamma\right)  .
\end{align*}

We proceed to show that the real part of $D_{s}G\cdot F+\frac{1}{2k_{n}}GF$ is
equal to zero, that is
\[
\gamma\partial_{s}\tilde{f}_{n}-\tau\partial_{s}\tilde{g}_{n}-\left(
\tilde{f}_{n}\partial_{s}\gamma-\tilde{g}_{n}\partial_{s}\tau\right)
+\frac{1}{2k_{n}}\left(  \tilde{f}_{n}\gamma-\tilde{g}_{n}\tau\right)  =0.
\]
To see this, we first consider those terms involving $\exp\left(  \frac{1}%
{2}\varepsilon_{n}\tilde{\eta}_{n}\right)  $ with $\varepsilon_{n}=-1.$
Consider a typical sum $I$ of terms in $\gamma\partial_{s}\tilde{f}_{n},$ of
the form
\begin{align*}
&  \exp\left(  \sum\limits_{j=1}^{n-1}\frac{\varepsilon_{j}}{2}\left(
\tilde{\eta}_{j}+\frac{\pi i}{2}\right)  +\frac{\left(  n-1\right)  \pi i}%
{4}\right)  \prod\limits_{m<j\leq n-1}\left(  k_{m}-\varepsilon_{m}%
\varepsilon_{j}k_{j}\right) \\
&  \exp\left(  \sum\limits_{j=1}^{n}\frac{\hat{\varepsilon}_{j}}{2}\left(
\tilde{\eta}_{j}+\frac{\pi i}{2}\right)  +\frac{n\pi i}{4}\right)
\prod\limits_{m<j\leq n}\left(  k_{m}-\hat{\varepsilon}_{m}\hat{\varepsilon
}_{j}k_{j}\right)  \times\frac{1}{2}\left(  \hat{\varepsilon}_{1}k_{1}%
^{-1}+...+\hat{\varepsilon}_{n}k_{n}^{-1}\right) \\
&  +\exp\left(  \sum\limits_{j=1}^{n-1}\frac{\hat{\varepsilon}_{j}}{2}\left(
\tilde{\eta}_{j}+\frac{\pi i}{2}\right)  +\frac{\left(  n-1\right)  \pi i}%
{4}\right)  \prod\limits_{m<j\leq n-1}\left(  k_{m}-\hat{\varepsilon}_{m}%
\hat{\varepsilon}_{j}k_{j}\right)  \times\\
&  \exp\left(  \sum\limits_{j=1}^{n}\frac{\varepsilon_{j}}{2}\left(
\tilde{\eta}_{j}+\frac{\pi i}{2}\right)  +\frac{n\pi i}{4}\right)
\prod\limits_{m<j\leq n}\left(  k_{m}-\varepsilon_{m}\varepsilon_{j}%
k_{j}\right)  \times\frac{1}{2}\left(  \varepsilon_{1}k_{1}^{-1}%
+...+\varepsilon_{n}k_{n}^{-1}\right)  ,
\end{align*}
with $\prod\limits_{j=1}^{n-1}\varepsilon_{j}=\prod\limits_{j=1}^{n-1}%
\hat{\varepsilon}_{j}=\left(  -1\right)  ^{n-1},$ and $\varepsilon_{n}%
=\hat{\varepsilon}_{n}=-1.$ The function $I$ has a \textquotedblleft
dual\textquotedblright\ part $I^{\ast}$ in $\tilde{f}_{n}\partial_{s}\gamma,$
of the form
\begin{align*}
&  \exp\left(  \sum\limits_{j=1}^{n}\frac{\varepsilon_{j}}{2}\left(
\tilde{\eta}_{j}+\frac{\pi i}{2}\right)  +\frac{n\pi i}{4}\right)
\prod\limits_{m<j\leq n}\left(  k_{m}-\varepsilon_{m}\varepsilon_{j}%
k_{j}\right)  \times\\
&  \exp\left(  \sum\limits_{j=1}^{n-1}\frac{\hat{\varepsilon}_{j}}{2}\left(
\tilde{\eta}_{j}+\frac{\pi i}{2}\right)  +\frac{\left(  n-1\right)  \pi i}%
{4}\right)  \prod\limits_{m<j\leq n-1}\left(  k_{m}-\hat{\varepsilon}_{m}%
\hat{\varepsilon}_{j}k_{j}\right)  \times\frac{1}{2}\left(  \hat{\varepsilon
}_{1}k_{1}^{-1}+...+\hat{\varepsilon}_{n-1}k_{n-1}^{-1}\right) \\
&  +\exp\left(  \sum\limits_{j=1}^{n}\frac{\hat{\varepsilon}_{j}}{2}\left(
\tilde{\eta}_{j}+\frac{\pi i}{2}\right)  +\frac{n\pi i}{4}\right)
\prod\limits_{m<j\leq n}\left(  k_{m}-\hat{\varepsilon}_{m}\hat{\varepsilon
}_{j}k_{j}\right)  \times\\
&  \exp\left(  \sum\limits_{j=1}^{n-1}\frac{\varepsilon_{j}}{2}\left(
\tilde{\eta}_{j}+\frac{\pi i}{2}\right)  +\frac{\left(  n-1\right)  \pi i}%
{4}\right)  \prod\limits_{m<j\leq n-1}\left(  k_{m}-\varepsilon_{m}%
\varepsilon_{j}k_{j}\right)  \times\frac{1}{2}\left(  \varepsilon_{1}%
k_{1}^{-1}+...+\varepsilon_{n-1}k_{n-1}^{-1}\right)
\end{align*}
Subtracting $I$ with $I^{\ast},$ we obtain
\begin{align*}
&  \exp\left(  \sum\limits_{j=1}^{n}\frac{\varepsilon_{j}}{2}\left(
\tilde{\eta}_{j}+\frac{\pi i}{2}\right)  +\frac{n\pi i}{4}\right)
\prod\limits_{m<j\leq n}\left(  k_{m}-\varepsilon_{m}\varepsilon_{j}%
k_{j}\right)  \times\\
&  \exp\left(  \sum\limits_{j=1}^{n-1}\frac{\hat{\varepsilon}_{j}}{2}\left(
\tilde{\eta}_{j}+\frac{\pi i}{2}\right)  +\frac{\left(  n-1\right)  \pi i}%
{4}\right)  \prod\limits_{m<j\leq n-1}\left(  k_{m}-\hat{\varepsilon}_{m}%
\hat{\varepsilon}_{j}k_{j}\right)  \times\\
&  \frac{\varepsilon_{n}k_{n}^{-1}}{2}\left(  \prod\limits_{i<n}\left(
k_{i}-\varepsilon_{i}\varepsilon_{n}k_{n}\right)  +\prod\limits_{i<n}\left(
k_{i}-\hat{\varepsilon}_{i}\hat{\varepsilon}_{n}k_{n}\right)  \right)  .
\end{align*}
This corresponds to the sum of two terms in $-\frac{k_{n}^{-1}}{2}\tilde
{f}_{n}\gamma.$ Hence if one only considers those terms involving $\exp\left(
\frac{1}{2}\varepsilon_{n}\tilde{\eta}_{n}\right)  $ with $\varepsilon
_{n}=-1,$ then $\gamma\partial_{s}\tilde{f}_{n}-\tau\partial_{s}\tilde{g}%
_{n}=-\frac{k_{n}^{-1}}{2}\tilde{f}_{n}\gamma,$ similarly for $\tau
\partial_{s}\tilde{g}_{n}-\tilde{g}_{n}\partial_{s}\tau+\frac{k_{n}^{-1}}%
{2}\tilde{g}_{n}\tau.$

For those terms involving $\exp\left(  \frac{1}{2}\varepsilon_{n}\tilde{\eta
}_{n}\right)  $ with $\varepsilon_{n}=1,$ there is a similar cancelation
between $\gamma\partial_{s}\tilde{f}_{n}-\tau\partial_{s}\tilde{g}_{n}$ and
$-\frac{k_{n}^{-1}}{2}\tilde{g}_{n}\tau$, also there is cancelation between
$\tau\partial_{s}\tilde{g}_{n}-\tilde{g}_{n}\partial_{s}\tau$ and
$-\frac{k_{n}^{-1}}{2}\tilde{f}_{n}\gamma$.

Summarizing, we get
\[
\gamma\partial_{s}\tilde{f}_{n}-\tau\partial_{s}\tilde{g}_{n}-\left(
\tilde{f}_{n}\partial_{s}\gamma-\tilde{g}_{n}\partial_{s}\tau\right)
+\frac{k_{n}^{-1}}{2}\left(  \tilde{f}_{n}\gamma-\tilde{g}_{n}\tau\right)
=0.
\]
The proof is completed.
\end{proof}

\section{Multiple-end solutions and B\"{a}cklund transformation of the
elliptic sine-Gordon equation}

In this section, we consider the elliptic sine-Gordon equation in the form
\begin{equation}
\partial_{x}^{2}u+\partial_{y}^{2}u=\sin u. \label{esg}%
\end{equation}
Note that $u$ is a solution to $\left(  \ref{esg}\right)  $ if and only if
$u-\pi$ is a solution to $\left(  \ref{SG}\right)  .$ The elliptic sine-Gordon
equation has been studied by Leibbrandt in \cite{Leibbrandt}, with an
application to the Josephson effect. He uses the B\"{a}cklund transformation
method. However, the solutions he found is singular at some points in the
plane. Gutshabash-Lipovski\u{\i}\cite{Gut} studied the boundary value problem
of the elliptic sine-Gordon equation in the half plane using inverse
scattering transform and obtained mutli-soliton solutions in the determinant
form. The boundary problems have also been studied in \cite{P2,P4,P1,P3} by
the Fokas direct method.

Our observation in this paper is that in the hyperbolic sine-Gordon equation
$\left(  \ref{csg}\right)  ,$ it we introduce the changing of variable $z=yi,$
where $i$ is the complex unit, then we get the elliptic sine-Gordon equation.
Based on this, by choosing certain complex parameters in $\left(
\ref{f}\right)  ,\left(  \ref{g}\right)  $ for the solutions of the hyperbolic
sine-Gordon equation, we then get multiple-end solutions of the elliptic
sine-Gordon equation. Let us describe this in more details.

Let $p_{j},q_{j}$ be real numbers with $p_{j}^{2}+q_{j}^{2}=1.$ Similar as the
hyperbolic sine-Gordon case, we define%
\[
\alpha\left(  j,k\right)  =\frac{\left(  p_{j}-p_{k}\right)  ^{2}+\left(
q_{j}-q_{k}\right)  ^{2}}{\left(  p_{j}+p_{k}\right)  ^{2}+\left(  q_{j}%
+q_{k}\right)  ^{2}}.
\]
We still use the notation
\[
a\left(  i_{1},i_{2},...,i_{n}\right)  =1,\text{ if }n=0\text{,}1,
\]%
\[
a\left(  i_{1},i_{2},...,i_{n}\right)  =\prod\limits_{k<l}\alpha\left(
i_{k},i_{l}\right)  ,\text{ if }n\geq2.
\]
Define $\eta_{i}=p_{i}x-q_{i}y-\eta_{i}^{0}.$ Then the elliptic sine-Gordon
equation has the solution
\begin{equation}
U_{n}:=4\arctan\frac{g}{f}, \label{Un}%
\end{equation}
where
\[
f=\sum_{k=0}^{\left[  n/2\right]  }\left(  \sum\limits_{\left\{  n,2k\right\}
}\left[  a\left(  i_{1},...,i_{2k}\right)  \exp\left(  \eta_{i_{1}}%
+...+\eta_{i_{2k}}\right)  \right]  \right)  ,
\]%
\[
g=\sum_{m=0}^{\left[  \left(  n-1\right)  /2\right]  }\left(  \sum
\limits_{\left\{  n,2m+1\right\}  }\left[  a\left(  i_{1},...,i_{2m+1}\right)
\exp\left(  \eta_{i_{1}}+...+\eta_{i_{2m+1}}\right)  \right]  \right)  .
\]
Note that $U_{n}-\pi$ is indeed a smooth $2n$-end solution of $\left(
\ref{SG}\right)  .$

In the special case of $n=2,$ if we choose $p_{1}=p_{2}=p$ and $q_{1}%
=-q_{2}=q,$ $\eta_{1}^{0}=\eta_{2}^{0}=\ln\frac{p}{q},$ then we get the
solution
\[
\varphi_{p,q}\left(  x,y\right)  :=4\arctan\left(  \frac{p\cosh\left(
qy\right)  }{q\cosh\left(  px\right)  }\right)  -\pi.
\]
This corresponds to a four-end solution of the elliptic sine-Gordon equation
$\left(  \ref{SG}\right)  $. Note that on the lines $px=\pm qy,$
$\varphi_{p,q}=4\arctan\frac{p}{q}-\pi.$ In the special case $p=q=\frac
{\sqrt{2}}{2},$ the solution is
\begin{equation}
4\arctan\left(  \frac{\cosh\left(  \frac{y}{\sqrt{2}}\right)  }{\cosh\left(
\frac{x}{\sqrt{2}}\right)  }\right)  -\pi. \label{saddle}%
\end{equation}
This is the classical saddle solution.

We remark that this family of 4-end solutions has analogous in the minimal
surface theory. They are the so called Scherk second surface family, which are
embedded singly periodic minimal surfaces in $\mathbb{R}^{3}.$ Explicitly,
these surface can be described by
\[
\cos^{2}\theta\cosh\frac{x}{\cos\theta}-\sin^{2}\theta\sinh\frac{y}{\sin
\theta}=\cos z.
\]
Here $\theta$ is a parameter. Each of these surfaces has four wings, called
ends of the surfaces. Geometrically, they are obtained by desingularized two
intersected planes with intersection angle $\theta.$

Next, we would like to investigate the B\"{a}cklund transformation for the
solutions of elliptic sine-Gordon equation. Let $k_{j}=p_{j}+q_{j}i.$ Define
$\xi_{j}$ by
\[
e^{\xi_{j}}=\prod\limits_{l<j}\frac{k_{l}+k_{j}}{k_{l}-k_{j}}\prod
\limits_{j<l}\frac{k_{j}+k_{l}}{k_{j}-k_{l}}.
\]
Recall that for all $j,$ $p_{j}^{2}+q_{j}^{2}=1.$\ Hence the number
$\frac{k_{l}+k_{j}}{k_{l}-k_{j}}$ is purely imaginary and $e^{\xi_{j}}$ is in
general complex valued. We define $\tilde{\eta}_{j}=\eta_{j}-\xi_{j}%
=p_{j}x+q_{j}y+\eta_{j}^{0}-\xi_{j},j=1,...,n.$ Then the solution $U_{n}$ can
be written as $4\arctan\frac{\tilde{g}_{n}}{\tilde{f}_{n}},$ where
\[
\tilde{f}_{n}=\sum\limits_{\prod\limits_{j=1}^{n}\varepsilon_{j}=\left(
-1\right)  ^{n}}\left(  \exp\left(  \sum\limits_{j=1}^{n}\frac{\varepsilon
_{j}}{2}\left(  \tilde{\eta}_{j}+\frac{\pi i}{2}\right)  +\frac{n\pi i}%
{4}\right)  \prod\limits_{m<j\leq n}\left(  k_{m}-\varepsilon_{m}%
\varepsilon_{j}k_{j}\right)  \right)  ,
\]%
\[
\tilde{g}_{n}=\sum\limits_{\prod\limits_{j=1}^{n}\varepsilon_{j}=\left(
-1\right)  ^{n+1}}\left(  \exp\left(  \sum\limits_{j=1}^{n}\frac
{\varepsilon_{j}}{2}\left(  \tilde{\eta}_{j}+\frac{\pi i}{2}\right)
+\frac{\left(  n-2\right)  \pi i}{4}\right)  \prod\limits_{m<j\leq n}\left(
k_{m}-\varepsilon_{m}\varepsilon_{j}k_{j}\right)  \right)  .
\]
Furthermore, we define
\[
\gamma=\sum\limits_{\prod\limits_{j=1}^{n-1}\varepsilon_{j}=\left(  -1\right)
^{n-1}}\left(  \exp\left(  \sum\limits_{j=1}^{n-1}\frac{\varepsilon_{j}}%
{2}\left(  \tilde{\eta}_{j}+\frac{\pi i}{2}\right)  +\frac{\left(  n-1\right)
\pi i}{4}\right)  \prod\limits_{m<j\leq n-1}\left(  k_{m}-\varepsilon
_{m}\varepsilon_{j}k_{j}\right)  \right)  ,
\]%
\[
\tau=\sum\limits_{\prod\limits_{j=1}^{n-1}\varepsilon_{j}=\left(  -1\right)
^{n}}\left(  \exp\left(  \sum\limits_{j=1}^{n-1}\frac{\varepsilon_{j}}%
{2}\left(  \tilde{\eta}_{j}+\frac{\pi i}{2}\right)  +\frac{\left(  n-3\right)
\pi i}{4}\right)  \prod\limits_{m<j\leq n-1}\left(  k_{m}-\varepsilon
_{m}\varepsilon_{j}k_{j}\right)  \right)  .
\]
Let $x=s+t,y=-i\left(  s-t\right)  $ and $u=U_{n},$ $v=4\arctan\frac{\tau
}{\gamma}.$ A direct consequence of Lemma \ref{Back} is the following

\begin{lemma}
\bigskip The functions $u$ and $v$ are connected through the following
B\"{a}cklund transformation:
\begin{equation}
\left\{
\begin{array}
[c]{l}%
\partial_{x}u=-i\partial_{y}v+\bar{k}_{n}\sin\frac{u+v}{2}+k_{n}\sin\frac
{u-v}{2},\\
i\partial_{y}u=-\partial_{x}v-\bar{k}_{n}\sin\frac{u+v}{2}+k_{n}\sin\frac
{u-v}{2}.
\end{array}
\right.  \label{uv}%
\end{equation}

\end{lemma}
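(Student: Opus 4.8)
The plan is to deduce $\left(\ref{uv}\right)$ from Lemma \ref{Back} by the complex substitution $z=yi$ and a translation of the bilinear B\"{a}cklund transformation into its trigonometric (PDE) form. The elliptic parameters $k_{j}=p_{j}+q_{j}i$ are exactly the images of the hyperbolic ones under $P_{j}=p_{j}$, $Q_{j}=q_{j}i$, so that $P_{j}^{2}-Q_{j}^{2}=1$ becomes $p_{j}^{2}+q_{j}^{2}=1$, and the functions $\tilde{f}_{n},\tilde{g}_{n},\gamma,\tau$ of this section are the analytic continuations of their hyperbolic counterparts. Since the two identities in Lemma \ref{Back} are algebraic in $k_{j},k_{j}^{-1}$ and the exponentials $e^{\pm\tilde{\eta}_{j}/2}$ (its proof is a formal cancellation that never invokes reality of the parameters), they persist under this continuation, yielding
\[
D_{s}G\cdot F=-\tfrac{1}{2k_{n}}\bar{G}\bar{F},\qquad D_{t}G\cdot\bar{F}=-\tfrac{k_{n}}{2}\bar{G}F,
\]
with $F=\gamma+i\tau$ and $G=\tilde{f}_{n}+i\tilde{g}_{n}$.

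Next I would invoke the equivalence recorded just before $\left(\ref{bilinear}\right)$: if $F,G$ satisfy this bilinear system, then $u=2i\ln\frac{\bar{G}}{G}=U_{n}$ and $v=2i\ln\frac{\bar{F}}{F}=4\arctan\frac{\tau}{\gamma}$ solve the trigonometric system $\left(\ref{Backlund}\right)$ in the light-cone variables, with $v$ in the slot of $f$ and $u$ in the slot of $g$. Matching with $\left(\ref{bilinear}\right)$ fixes $h=-1/k_{n}$, so the trigonometric parameter is $\beta=-1/k_{n}$ and $\beta^{-1}=-k_{n}$. Here the hypothesis $p_{n}^{2}+q_{n}^{2}=1$ enters decisively: it gives $|k_{n}|=1$, hence $k_{n}^{-1}=\bar{k}_{n}$ and therefore $\beta=-\bar{k}_{n}$, which is precisely what turns the coefficients into the conjugate pair appearing in $\left(\ref{uv}\right)$.

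The last step is the change of variables. From $x=s+t$, $y=-i(s-t)$ one computes $\partial_{s}=\partial_{x}-i\partial_{y}$ and $\partial_{t}=\partial_{x}+i\partial_{y}$. Substituting these, together with $2\beta=-2\bar{k}_{n}$, $2\beta^{-1}=-2k_{n}$ and $\sin\frac{v-u}{2}=-\sin\frac{u-v}{2}$, into $\left(\ref{Backlund}\right)$ produces two equations that are linear in $\partial_{x}u,\partial_{y}u,\partial_{x}v,\partial_{y}v$. Subtracting them isolates $\partial_{x}u$ and gives the first line of $\left(\ref{uv}\right)$, while adding them isolates $i\partial_{y}u$ and gives the second; once the coefficients are in place this is a short linear computation.

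The main obstacle is the conjugation and sign bookkeeping in the first two steps. Because the elliptic parameters are genuinely complex, one must check that the bar carried over from Lemma \ref{Back} — complex conjugation in the real hyperbolic regime — still realizes $u$ and $v$ as the stated $\arctan$ expressions, equivalently that $\gamma,\tau,\tilde{f}_{n},\tilde{g}_{n}$ remain invariant under the inner part of the continued conjugation so that $\bar{F}=\gamma-i\tau$ and $\bar{G}=\tilde{f}_{n}-i\tilde{g}_{n}$. This rests on the reflection relations forced by $|k_{j}|=1$, namely $\bar{k}_{j}=k_{j}^{-1}$ together with the phase relations for $\xi_{j}$ coming from the purely imaginary ratios $\tfrac{k_{l}+k_{j}}{k_{l}-k_{j}}$; I would verify these by the same pairing $\varepsilon_{j}\mapsto-\varepsilon_{j}$ of summation indices used in the proof of Lemma \ref{Back}, using the constraint $\prod_{j}\varepsilon_{j}=(-1)^{n}$ to see that the residual $\varepsilon$-dependent phases are in fact constant on each admissible index set. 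Keeping track of these factors, and of the signs introduced by $z=yi$, so that the coefficients emerge precisely as the conjugate pair $\bar{k}_{n},k_{n}$ rather than as unrelated complex numbers, is where the care is needed; the remainder is routine.
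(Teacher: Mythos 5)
Your proposal is correct and is essentially the paper's own route: the paper gives no separate argument, presenting the lemma as a direct consequence of Lemma \ref{Back} under the substitution $x=s+t$, $y=-i(s-t)$, and your write-up simply fills in the three implicit steps — continuation of the bilinear identities to the complex parameters $k_j=p_j+q_ji$ (legitimate because the proof of Lemma \ref{Back} is formal cancellation, with the bars read as the formal conjugates $\tilde f_n-i\tilde g_n$, $\gamma-i\tau$), the bilinear-to-trigonometric dictionary of \eqref{bilinear}/\eqref{Backlund} giving $\beta=-1/k_n=-\bar k_n$ via $|k_n|=1$, and the linear change-of-variables computation with $\partial_s=\partial_x-i\partial_y$, $\partial_t=\partial_x+i\partial_y$. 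The coefficients and signs you obtain match \eqref{uv} exactly, so no gap remains.
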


We remark that $\frac{\tau}{\gamma}$ is purely imaginary. The function
$\sin\frac{v}{2}$ is understood to be
\begin{align*}
\sin\left(  2\arctan\frac{\tau}{\gamma}\right)   &  =\frac{2\gamma\tau}%
{\gamma^{2}+\tau^{2}},\\
\cos\left(  2\arctan\frac{\tau}{\gamma}\right)   &  =\frac{\gamma^{2}-\tau
^{2}}{\gamma^{2}+\tau^{2}},
\end{align*}
and $\partial_{x}v=4\frac{\gamma\partial_{x}\tau-\tau\partial_{x}\gamma
}{\gamma^{2}+\tau^{2}}.$

\section{\bigskip Linearized B\"{a}cklund transformation and nondegeneracy of
the $2n$-end solution of the elliptic sine-Gordon equation}

Linearizing the B\"{a}cklund transformation $\left(  \ref{uv}\right)  $ at
$\left(  u,v\right)  ,$ we get the linearized system
\[
\left\{
\begin{array}
[c]{l}%
\partial_{x}\phi=-i\partial_{y}\eta+\bar{k}_{n}\cos\frac{u+v}{2}\left(
\frac{\phi+\eta}{2}\right)  +k_{n}\cos\frac{u-v}{2}\left(  \frac{\phi-\eta}%
{2}\right)  ,\\
i\partial_{y}\phi=-\partial_{x}\eta-\bar{k}_{n}\cos\frac{u+v}{2}\left(
\frac{\phi+\eta}{2}\right)  +k_{n}\cos\frac{u-v}{2}\left(  \frac{\phi-\eta}%
{2}\right)  .
\end{array}
\right.
\]
It can be written as%
\begin{equation}
\left\{
\begin{array}
[c]{c}%
L\phi=M\eta,\\
T\phi=N\eta,
\end{array}
\right.  \label{LB}%
\end{equation}
where
\begin{align*}
L\phi &  =\partial_{x}\phi-\left(  \bar{k}_{n}\cos\frac{u+v}{2}+k_{n}\cos
\frac{u-v}{2}\right)  \frac{\phi}{2},\\
T\phi &  =i\partial_{y}\phi+\left(  \bar{k}_{n}\cos\frac{u+v}{2}-k_{n}%
\cos\frac{u-v}{2}\right)  \frac{\phi}{2},\\
M\eta &  =-i\partial_{y}\eta+\left(  \bar{k}_{n}\frac{u+v}{2}-k_{n}\cos
\frac{u-v}{2}\right)  \frac{\eta}{2},\\
N\eta &  =-\partial_{x}\eta-\left(  \bar{k}_{n}\cos\frac{u+v}{2}+k_{n}%
\cos\frac{u-v}{2}\right)  \frac{\eta}{2}.
\end{align*}

To simplify the notation, we write $\tilde{f}_{n}$ as $f$, and $\tilde{g}_{n}$
as $g.$ Explicitly, using the formulas of $u$ and $v,$ we find that $L\phi$ is
equal to
\[
\partial_{x}\phi-\left(  \bar{k}_{n}\left(  \frac{\left(  f\gamma
-g\tau\right)  ^{2}}{\left(  f^{2}+g^{2}\right)  \left(  \gamma^{2}+\tau
^{2}\right)  }-1\right)  +k_{n}\left(  \frac{\left(  f\gamma+g\tau\right)
^{2}}{\left(  f^{2}+g^{2}\right)  \left(  \gamma^{2}+\tau^{2}\right)
}-1\right)  \right)  \phi.
\]
The analysis of this operator is complicated by the fact that the function
$\frac{\tau}{\gamma}$ is purely imaginary, hence $\gamma^{2}+\tau^{2}$ will be
equal to zero at some points of $\mathbb{R}^{2}.$ We define this singular set
as
\[
S:=\left\{  \left(  x,y\right)  :\gamma^{2}+\tau^{2}=0\right\}  .
\]
Note that the asymptotic behavior of $\gamma$ and $\tau$ are determined by
some explicit exponential functions. It follows that for each fixed $y,$ there
are only finitely many points in $S.$ Now we denote
\[
\Gamma\left(  x,y\right)  :=\bar{k}_{n}\frac{2\left(  f\gamma-g\tau\right)
^{2}}{\left(  f^{2}+g^{2}\right)  \left(  \gamma^{2}+\tau^{2}\right)  }.
\]
Then $\Gamma$ is singular at $S$ and
\begin{align*}
L\phi &  =\partial_{x}\phi-\operatorname{Re}\left(  \Gamma-\bar{k}_{n}\right)
\phi,\\
T\phi &  =i\partial_{y}\phi+i\operatorname{Im}\left(  \Gamma-\bar{k}%
_{n}\right)  \phi.
\end{align*}
Rotating the axis if necessary, we can assume $p_{j}\neq0,$ for any $j,$ and
$p_{n}>0.$

\begin{lemma}
For any fixed $y\in\mathbb{R},$
\[
\Gamma\left(  x,y\right)  \rightarrow0\text{ as }x\rightarrow\pm\infty.
\]

\end{lemma}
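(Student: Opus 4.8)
The plan is to treat $f,g,\gamma,\tau$ as finite sums of exponentials and run a dominant-balance analysis in $x$ for fixed $y$. Each $\tilde\eta_j$ has the form $p_j x + i\psi_j + c_j(y)$, with the $x$-dependence entering only through the real linear term $p_j x$, so every summand of $f,g,\gamma,\tau$ is a constant times $\exp\bigl(\tfrac12\sum_j \varepsilon_j \tilde\eta_j\bigr)$ and its modulus grows like $\exp\bigl(\tfrac{x}{2}\sum_j \varepsilon_j p_j\bigr)$. Since the singular set $S$ meets each horizontal line in only finitely many points, for $|x|$ large we are away from $S$ and $\Gamma$ is honestly the quotient of the surviving leading exponentials.

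A decisive simplification comes from the two reality facts already recorded: $g/f$ is real and $\tau/\gamma$ is purely imaginary. Writing $\rho := g/f$ and $\mu := \tau/(i\gamma)$, both real-valued, a one-line substitution gives
\[
\frac{(f\gamma - g\tau)^2}{(f^2+g^2)(\gamma^2+\tau^2)}=\frac{(1-i\rho\mu)^2}{(1+\rho^2)(1-\mu^2)},\qquad \Gamma = 2\bar{k}_n\,\frac{(1-i\rho\mu)^2}{(1+\rho^2)(1-\mu^2)} .
\]
Thus the problem reduces to finding the limits of the scalar ratios $\rho,\mu$ as $x\to\pm\infty$, and an elementary inspection of this rational expression shows it vanishes exactly when one of $\rho,\mu$ tends to $0$ while the other tends to $\infty$.

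To compute these limits I would locate, in each of the four sums, the $\varepsilon$-pattern extremal for $\tfrac12\sum\varepsilon_j p_j$ under the respective parity constraint $\prod\varepsilon_j=(-1)^n,(-1)^{n+1},(-1)^{n-1},(-1)^n$ for $f,g,\gamma,\tau$. As $x\to+\infty$ the extremal full pattern is $\varepsilon_j=\operatorname{sign}(p_j)$, which has $\varepsilon_n=+1$ by the normalization $p_n>0$. Since $f$ (restricted to $\varepsilon_n=+1$) and $\tau$ share the same parity in the first $n-1$ indices, and likewise $g$ and $\gamma$, the dominant member of $\{f,g\}$ is always paired across with the dominant member of $\{\gamma,\tau\}$: concretely $f,\tau$ dominate together, or $g,\gamma$ dominate together. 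The crucial point is that the product of the two dominant functions ($f\tau$ or $g\gamma$) does not occur in the numerator $f\gamma-g\tau$, whereas the denominator is controlled by exactly this product squared. Hence the numerator has strictly smaller exponential rate than $\sqrt{(f^2+g^2)(\gamma^2+\tau^2)}$, one of $\rho,\mu$ tends to $0$ and the other to $\infty$, and $\Gamma\to0$.

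The main obstacle is precisely this parity bookkeeping, together with the fact that the normalization $p_n>0$ breaks the symmetry $x\mapsto -x$, so the two ends must be handled separately. At $x\to-\infty$ the extremal pattern instead has $\varepsilon_n=-1$, and $f$ is then matched with $\gamma$ rather than $\tau$; one must recheck with care whether the dominant–dominant product now enters the numerator, which is the genuinely delicate step and the place where the analysis of the two ends diverges. A subsidiary point to settle is that the coefficients $\prod_{m<j}(k_m-\varepsilon_m\varepsilon_j k_j)$ attached to the extremal patterns are nonzero, so that the terms singled out really are the leading ones; this holds for distinct $k_j$ and should be recorded as a standing hypothesis.
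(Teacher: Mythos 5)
Your reduction of $\Gamma$ to $2\bar{k}_n\frac{(1-i\rho\mu)^2}{(1+\rho^2)(1-\mu^2)}$ and your treatment of $x\to+\infty$ are correct, and they are exactly the ``main order of $f,g,\gamma,\tau$'' analysis that the paper's one-line proof gestures at: since $p_n>0$, the extremal pattern has $\varepsilon_n=+1$, the parity bookkeeping pairs $f$ with $\tau$ and $g$ with $\gamma$, the dominant product is then absent from the numerator $f\gamma-g\tau$, and $\Gamma$ decays exponentially. But your proof stops there. For $x\to-\infty$ you only record that the pairing flips and that one ``must recheck with care whether the dominant--dominant product now enters the numerator.'' That unresolved half is a genuine gap, not a routine verification: it is the entire content of the two-sided claim.

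Moreover, if one carries out the check you deferred, it goes the wrong way. At $x\to-\infty$ the extremal pattern has $\varepsilon_n=-1$; restricting the parity constraint of $f$ to $\varepsilon_n=-1$ leaves $(n-1)$-parity $(-1)^{n-1}$, which is $\gamma$'s parity, so now $f$ pairs with $\gamma$ and $g$ pairs with $\tau$. The dominant product is therefore $f\gamma$ or $g\tau$ --- precisely the products occurring in the numerator --- so numerator and denominator have the same exponential rate with identical leading coefficients, and the dominant-balance method gives $\Gamma\to 2\bar{k}_n\neq 0$; equivalently, $\rho$ and $\mu$ tend to $0$ together (or to $\infty$ together), which your own rational-function computation shows produces the limit $2\bar{k}_n$ rather than $0$. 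This is already visible in the completely explicit case $n=1$: there $\gamma\equiv 1$, $\tau\equiv 0$, $f=e^{-\tilde{\eta}_1/2}$, $g=e^{\tilde{\eta}_1/2}$, hence $\Gamma=2\bar{k}_1/(1+e^{2\tilde{\eta}_1})\to 2\bar{k}_1$ as $x\to-\infty$. So, with the formulas as printed, exponential decay of $\Gamma$ holds as $x\to+\infty$, while as $x\to-\infty$ the limit is $2\bar{k}_n$ (reached exponentially); this tension also affects the paper's subsequent definition of $\xi$ through $\int_{-\infty}^{x}\operatorname{Re}\Gamma\,ds$, which requires integrability at $-\infty$. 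In short, your proposal does not establish the statement, and the half you left open is exactly where the argument --- and, as far as I can check, the statement itself in its printed form --- breaks down; a correct write-up must either restate the left-end asymptotics or identify a sign/convention fix, neither of which your attempt supplies.
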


\begin{proof}
This follows directly from analyzing the main order of $f,g$ and $\gamma,\tau
$, as $\left\vert x\right\vert \rightarrow+\infty.$ Indeed, $\Gamma$ decays
exponentially fast as infinity.
\end{proof}

We define the function%
\[
\xi\left(  x,y\right)  :=\exp\left(  -x\operatorname{Re}\bar{k}_{n}%
+y\operatorname{Im}\bar{k}_{n}+\int_{-\infty}^{x}\operatorname{Re}\left(
\Gamma\left(  s,y\right)  \right)  ds\right)  .
\]
Then formally $L\xi=0,$ with $\xi\left(  x,y\right)  \rightarrow
e^{-x\operatorname{Re}\bar{k}_{n}+y\operatorname{Im}\bar{k}_{n}},$ as
$x\rightarrow-\infty.$ However, since $\Gamma$ has singularities, it is not
clear at this moment whether $\xi$ is well defined. Nevertheless, we will show
that $\xi$ is continuous.

We would like to analyze the singular set of $\Gamma$ away from the origin.

\begin{lemma}
\label{gamma}Let $\left(  x_{j},y_{j}\right)  $ be a sequence of points in $S$
such that $x_{j}^{2}+y_{j}^{2}\rightarrow+\infty,$ as $j\rightarrow+\infty.$
Then up to a subsequence, there is an index $j_{0}$ and sequence $A_{j}%
\in\mathbb{R},$ such that$,$
\[
\Gamma\left(  x_{j},y_{j}\right)  k_{j_{0}}\left(  p_{j_{0}}x_{j}+q_{j_{0}%
}y_{j}+A_{j}\right)  \rightarrow1,\text{ as }j\rightarrow+\infty.
\]

\end{lemma}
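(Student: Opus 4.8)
The plan is to treat $\Gamma$ as a function whose only singularities along each horizontal line come from the factor $\gamma^{2}+\tau^{2}$ in its denominator, and to compute its leading singular behaviour near the zero set $S$ at infinity. First I would rewrite the singular factor. Setting $F_{+}=\gamma+i\tau$ and $F_{-}=\gamma-i\tau$ we have $\gamma^{2}+\tau^{2}=F_{+}F_{-}$, and by collecting the phase shifts $\tfrac{\pi i}{2}$ in the exponents one checks that $i\tau$ carries the same phase $e^{(n-1)\pi i/4}$ as $\gamma$, so that both $F_{+}$ and $F_{-}$ are, up to a common nonzero constant, sums of the exponentials $E_{\varepsilon}=\exp\bigl(\sum_{j=1}^{n-1}\tfrac{\varepsilon_{j}}{2}(\tilde{\eta}_{j}+\tfrac{\pi i}{2})\bigr)\prod_{m<j\le n-1}(k_{m}-\varepsilon_{m}\varepsilon_{j}k_{j})$ taken over \emph{all} sign patterns $\varepsilon\in\{\pm1\}^{n-1}$ (with an extra factor $\prod_{j}\varepsilon_{j}$ in the case of $F_{-}$). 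Thus $F_{\pm}$ are the $(n-1)$-soliton analogues of $G=f+ig$; since $\tau/\gamma$ is purely imaginary, $\gamma^{2}+\tau^{2}=F_{+}F_{-}$ is a nonzero multiple of a real-valued function and $S=\{F_{+}=0\}\cup\{F_{-}=0\}$ is a union of real curves. I would also record the algebraic identity $f\gamma-g\tau=\tfrac12\bigl(GF_{+}+\bar GF_{-}\bigr)$, so that at a zero of $F_{+}$ the numerator of $\Gamma$ reduces to $\tfrac12\bar GF_{-}$ and is generically nonzero; hence $\Gamma$ has a simple pole there.

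Given an escaping sequence $(x_{j},y_{j})\in S$, I would pass to a subsequence so that $(x_{j},y_{j})/|(x_{j},y_{j})|$ converges and so that it is always $F_{+}$ (say) that vanishes. The core is then a dominant-balance analysis of $F_{+}$ at infinity: in the limiting direction exactly two of the exponentials $E_{\varepsilon}$ share the largest real exponent, all others being exponentially smaller, and a zero of $F_{+}$ forces these two leading terms $E_{\varepsilon},E_{\varepsilon'}$ to cancel. Writing out $E_{\varepsilon}/E_{\varepsilon'}=-1$, the two patterns must differ in a single index $j_{0}$; this both singles out $j_{0}$ and, since the ratio then depends on $\tilde{\eta}_{j_{0}}=p_{j_{0}}x+q_{j_{0}}y+\eta_{j_{0}}^{0}-\xi_{j_{0}}$ only, pins the zero to a line $p_{j_{0}}x+q_{j_{0}}y+A_{j}=0$, the real constant $A_{j}$ coming from the phase $\tfrac{\pi i}{2}$ together with the logarithm of the unit-modulus coefficient ratio $\prod_{m<j_{0}}\tfrac{k_{m}-\varepsilon_{m}\varepsilon_{j_{0}}k_{j_{0}}}{k_{m}+\varepsilon_{m}\varepsilon_{j_{0}}k_{j_{0}}}\prod_{j>j_{0}}\tfrac{k_{j_{0}}-\varepsilon_{j_{0}}\varepsilon_{j}k_{j}}{k_{j_{0}}+\varepsilon_{j_{0}}\varepsilon_{j}k_{j}}$; the reality of $A_{j}$ is forced by the same structure that makes $\tau/\gamma$ purely imaginary.

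Near this line I would expand $F_{+}$ to first order in the transverse variable $\ell:=p_{j_{0}}x+q_{j_{0}}y+A_{j}$. Using $E_{\varepsilon'}=-E_{\varepsilon}$ at the zero and $\varepsilon'_{j_{0}}=-\varepsilon_{j_{0}}$ gives $F_{+}=2E_{\varepsilon}\sinh\bigl(\tfrac{\varepsilon_{j_{0}}}{2}\ell\bigr)+\cdots\approx\varepsilon_{j_{0}}E_{\varepsilon}\,\ell$, so $F_{+}$ vanishes simply. Substituting into $\Gamma=2\bar k_{n}\,(f\gamma-g\tau)^{2}\big/\bigl((f^{2}+g^{2})F_{+}F_{-}\bigr)$ and using $f\gamma-g\tau=\tfrac12\bar GF_{-}$ at the zero, I obtain
\[
\Gamma\,\ell\;\longrightarrow\;\frac{\bar k_{n}\,\bar G^{2}F_{-}}{2\,(f^{2}+g^{2})\,\varepsilon_{j_{0}}E_{\varepsilon}}\Big|_{\ell=0},
\]
an expression in the \emph{leading} exponentials of the $n$-soliton $G=f+ig$ and of $F_{-}$ in the chosen sector (recall $f^{2}+g^{2}=G\bar G$). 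The remaining task is to evaluate this ratio and show it equals $k_{j_{0}}^{-1}$, which yields the claimed limit $\Gamma\,k_{j_{0}}\ell\to1$; the statement should be read as the assertion that $\Gamma$ carries a simple pole with this residue along the transverse variable, the points $(x_{j},y_{j})$ sitting on that pole.

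I expect the constant matching in this last step to be the main obstacle: one must track the dominant terms of $f,g,\gamma,\tau$ simultaneously in the sector determined by $(x_{j},y_{j})$, verify that the prefactor $\bar k_{n}$ carried by $\Gamma$ cancels against the $k_{n}$-factors appearing in the leading exponentials of the $n$-soliton $G$ relative to the $(n-1)$-soliton $F_{\pm}$, and confirm that the surviving coefficient products telescope to exactly $k_{j_{0}}^{-1}$. A secondary difficulty is the non-generic limiting directions, where three or more exponentials become comparable (the points approaching an ``end'' of the $(n-1)$-soliton configuration); there one must either refine the subsequence so that a single index $j_{0}$ still governs the transverse collapse, or analyse the corner regime directly and check that the same residue survives.
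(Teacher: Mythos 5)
Your structural reduction is sound, and it in fact parallels the skeleton of the paper's own argument: both proofs rest on (i) a dominant-balance analysis at infinity showing that along an escaping sequence in $S$ exactly one phase $\tilde{\eta}_{j_0}$ stays bounded, which pins the points to a line $p_{j_0}x+q_{j_0}y+A_j=0$, and (ii) the recognition that $\Gamma$ has a simple pole transverse to $S$ whose residue must be evaluated. Your factorization $\gamma^2+\tau^2=F_+F_-$ with $F_\pm=\gamma\pm i\tau$, the identity $f\gamma-g\tau=\tfrac12\left(GF_++\bar{G}F_-\right)$ (where $\bar{G}$ must be read as $f-ig$, a formal conjugate, since $\tilde{f}_n,\tilde{g}_n$ are complex-valued in the elliptic setting), and the expansion $F_+\approx\varepsilon_{j_0}E_\varepsilon\,\ell$ all check out, and your formula $\Gamma\,\ell\to\bar{k}_n\bar{G}F_-/\left(2G\varepsilon_{j_0}E_\varepsilon\right)$ follows correctly from them. (One small slip: the coefficient ratios $\frac{k_m-k_{j_0}}{k_m+k_{j_0}}$ are purely imaginary, not unit-modulus, though the constant $A_j$ is indeed real.)

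The genuine gap is that the limit is never evaluated, and this evaluation is precisely the content of the lemma: what must be shown is that the residue equals $k_{j_0}^{-1}=\bar{k}_{j_0}$, so that the stated product tends to $1$ and not merely to some nonzero constant. You explicitly defer this "constant matching" as the main obstacle, so the proposal proves only the qualitative statement that $\Gamma(x_j,y_j)\,k_{j_0}(p_{j_0}x_j+q_{j_0}y_j+A_j)$ has a finite limit along a subsequence. The paper closes this step with a short but essential trick: writing $\Gamma=2\bar{k}_n\frac{\left(1-\frac{g}{f}\frac{\tau}{\gamma}\right)^2}{\left(1+\frac{g^2}{f^2}\right)\left(1+\frac{\tau^2}{\gamma^2}\right)}$, it computes the limits $\frac{\tau}{\gamma}\to e^{\tilde{\eta}_{j_0}}\prod_{j<j_0}\frac{k_j+k_{j_0}}{k_j-k_{j_0}}\prod_{j_0<j\le n-1}\frac{k_{j_0}-k_j}{k_{j_0}+k_j}$ and $\frac{g}{f}\to e^{-\tilde{\eta}_{j_0}}\prod_{j<j_0}\frac{k_j-k_{j_0}}{k_j+k_{j_0}}\prod_{j_0<j\le n}\frac{k_{j_0}+k_j}{k_{j_0}-k_j}$, and then uses the defining relation of $S$, namely $(\tau/\gamma)^2=-1$, to eliminate $e^{2\tilde{\eta}_{j_0}}$; this yields $\frac{g}{f}\frac{\tau}{\gamma}\to\frac{k_{j_0}+k_n}{k_{j_0}-k_n}$ and $\frac{g^2}{f^2}\to-\left(\frac{k_{j_0}+k_n}{k_{j_0}-k_n}\right)^2$, after which the unit-modulus relations $k_n\bar{k}_n=1$ and $k_{j_0}^{-1}=\bar{k}_{j_0}$ collapse everything to $\Gamma\left(1+\tau^2/\gamma^2\right)\to-2\bar{k}_{j_0}$, and the lemma follows by linearizing $1+\tau^2/\gamma^2$ in $\tilde{\eta}_{j_0}$. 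Your route through $F_\pm$ would require the equivalent cancellation — tracking the dominant exponentials of $G$, $\bar{G}$, $F_-$ and $E_\varepsilon$ in the chosen sector and verifying that the products of factors $k_m\pm k_{j_0}$ telescope to $\bar{k}_{j_0}$ — and until that computation is actually carried out (including the analogue of the elimination step above), the proof is incomplete at its decisive point.
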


\begin{proof}
It will be convenient to multiply both $\gamma$ and $\tau$ by $\exp\left(
\frac{1}{2}\left(  \tilde{\eta}_{1}+...+\tilde{\eta}_{n-1}\right)  \right)  .$
Using the fact that $\left\vert \frac{\tau}{\gamma}\right\vert =1$ in $S,$ we
first infer that there exists an index $j_{0}$ and a universal constant $C$
such that $\left\vert \eta_{j_{0}}\right\vert \leq C$ for a subsequence of
$\left\{  \left(  x_{j},y_{j}\right)  \right\}  _{j=1}^{+\infty}.$(Otherwise,
$\left\vert \frac{\tau}{\gamma}\right\vert $ will be tending to $+\infty$ or
$0,$ depending on the parity of $n$).

We still denote this subsequence by $\left(  x_{j},y_{j}\right)  .$ Without
loss of generality, we may assume that as $j\rightarrow+\infty,$%
\begin{align*}
\eta_{m}  &  \rightarrow-\infty,\text{ for }m=1,...,j_{0}-1,\\
\eta_{m}  &  \rightarrow+\infty,\text{ for }m=j_{0}+1,...,n.
\end{align*}
We only consider the case that $n-j_{0}$ is odd. The other case is similar.

In view of the main order terms of $\tau$ and $\gamma,$ we get
\begin{equation}
\frac{\tau}{\gamma}\rightarrow\exp\left(  \tilde{\eta}_{j_{0}}\right)
\prod\limits_{j=1}^{j_{0}-1}\frac{k_{j}+k_{j_{0}}}{k_{j}-k_{j_{0}}}%
\prod\limits_{j=j_{0}+1}^{n-1}\frac{k_{j_{0}}-k_{j}}{k_{j_{0}}+k_{j}}.
\label{tau}%
\end{equation}
On the other hand, along this sequence $\left(  x_{j},y_{j}\right)  ,$%
\[
\frac{g}{f}\rightarrow\exp\left(  -\tilde{\eta}_{j_{0}}\right)  \prod
\limits_{j=1}^{j_{0}-1}\frac{k_{j}-k_{j_{0}}}{k_{j}+k_{j_{0}}}\prod
\limits_{j=j_{0}+1}^{n}\frac{k_{j_{0}}+k_{j}}{k_{j_{0}}-k_{j}}.
\]
Recall that $\gamma^{2}+\tau^{2}=1$ at $\left(  x_{j},y_{j}\right)  .$ Hence
\begin{equation}
\frac{g^{2}}{f^{2}}\rightarrow-\left(  \frac{k_{j_{0}}+k_{n}}{k_{j_{0}}-k_{n}%
}\right)  ^{2}. \label{fg}%
\end{equation}
Now we compute
\begin{align*}
\Gamma &  =\bar{k}_{n}\frac{2\left(  f\gamma-g\tau\right)  ^{2}}{\left(
f^{2}+g^{2}\right)  \left(  \gamma^{2}+\tau^{2}\right)  }\\
&  =2\bar{k}_{n}\frac{\left(  1-\frac{g}{f}\frac{\tau}{\gamma}\right)  ^{2}%
}{\left(  1+\frac{g^{2}}{f^{2}}\right)  \left(  1+\frac{\tau^{2}}{\gamma^{2}%
}\right)  }.
\end{align*}
Then by $\left(  \ref{tau}\right)  $ and $\left(  \ref{fg}\right)  ,$ as
$j\rightarrow+\infty,$
\[
\Gamma\left(  x_{j},y_{j}\right)  \left(  1+\frac{\tau^{2}}{\gamma^{2}%
}\right)  \rightarrow2\bar{k}_{n}\frac{\left(  1-\frac{k_{j_{0}}+k_{n}%
}{k_{j_{0}}-k_{n}}\right)  ^{2}}{1-\left(  \frac{k_{j_{0}}+k_{n}}{k_{j_{0}%
}-k_{n}}\right)  ^{2}}=-2\bar{k}_{j_{0}}.
\]
This then leads to the assertion of the lemma.
\end{proof}

By $\left(  \ref{tau}\right)  ,$ away from the origin, the singular set $S$
consists of finitely many components, each of them is asymptotic to a straight line.

\begin{lemma}
\label{time}Let $T_{1}:=L\phi-M\eta,$ $T_{2}:=T\phi-N\eta.$ Suppose that
$\Delta\eta=\eta\cos u$ and $T_{1}=0.$ Then
\begin{equation}
\partial_{x}T_{2}=\left(  \frac{\bar{k}_{n}}{2}\cos\frac{u+v}{2}+\frac{k_{n}%
}{2}\cos\frac{u-v}{2}\right)  T_{2}. \label{T2}%
\end{equation}

\end{lemma}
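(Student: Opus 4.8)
The plan is to prove the identity by direct computation, recognizing that the asserted formula is exactly the statement $L(T_2)=0$, where $L=\partial_x-\tfrac12\bigl(\bar k_n\cos\frac{u+v}{2}+k_n\cos\frac{u-v}{2}\bigr)$ is the operator appearing in the first linearized equation. To organize the bookkeeping I would abbreviate $A=\bar k_n\cos\frac{u+v}{2}$, $B=k_n\cos\frac{u-v}{2}$, $C=\tfrac{A+B}{2}$, $D=\tfrac{A-B}{2}$, so that $L\phi=\partial_x\phi-C\phi$, $T\phi=i\partial_y\phi+D\phi$, $M\eta=-i\partial_y\eta+D\eta$, $N\eta=-\partial_x\eta-C\eta$, and the goal is $\partial_x T_2=CT_2$ with $T_2=i\partial_y\phi+D\phi+\partial_x\eta+C\eta$.

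First I would expand $\partial_x T_2-CT_2$ and systematically eliminate every term carrying a derivative of $\phi$. The hypothesis $T_1=0$ gives $\partial_x\phi=C\phi-i\partial_y\eta+D\eta$, and differentiating this relation in $y$ supplies the one awkward term $\partial_x\partial_y\phi$; substituting both makes all $\phi$-derivative terms cancel (the pair $\pm iC\partial_y\phi$ and the pair $\pm iD\partial_y\eta$ drop out, and $DC\phi-CD\phi=0$). What remains is the purely first-order expression
\[
\partial_x T_2-CT_2=(i\partial_y C+\partial_x D)\,\phi+\Delta\eta+(i\partial_y D+\partial_x C+D^2-C^2)\,\eta .
\]
At this point I would invoke the linearized sine-Gordon equation satisfied by $\eta$ to turn the second-order term $\Delta\eta$ into a zeroth-order multiple of $\eta$, so that the whole right-hand side becomes a pointwise combination of $\phi$ and $\eta$. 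The claim then reduces to two pointwise identities in the coefficients: the vanishing of the $\phi$-coefficient, and the vanishing of the $\eta$-coefficient after the $\Delta\eta$ substitution.

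Both identities I would verify from the nonlinear Bäcklund transformation \eqref{uv} itself. Differentiating $C$ and $D$ by the chain rule produces the combinations $\partial_x u\pm\partial_x v$ and $\partial_y u\pm\partial_y v$ weighted by $\bar k_n\sin\frac{u+v}{2}$ and $k_n\sin\frac{u-v}{2}$; so I would rewrite \eqref{uv} as $\partial_x u+i\partial_y v=\bar k_n\sin\frac{u+v}{2}+k_n\sin\frac{u-v}{2}$ and $\partial_x v+i\partial_y u=-\bar k_n\sin\frac{u+v}{2}+k_n\sin\frac{u-v}{2}$, then take their sum and difference to obtain the two clean relations $(\partial_x u+\partial_x v)+i(\partial_y u+\partial_y v)=2k_n\sin\frac{u-v}{2}$ and $(\partial_x u-\partial_x v)-i(\partial_y u-\partial_y v)=2\bar k_n\sin\frac{u+v}{2}$. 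Substituting these collapses the $\phi$-coefficient to $0$ outright, and reduces the $\eta$-coefficient to products of $\sin\frac{u\pm v}{2}$ and $\cos\frac{u\pm v}{2}$ multiplied by $k_n\bar k_n$. Here the normalization $p_n^2+q_n^2=1$, i.e.\ $k_n\bar k_n=1$, is essential: it turns $AB=\cos\frac{u+v}{2}\cos\frac{u-v}{2}=\tfrac12(\cos u+\cos v)$ and leaves $\sin\frac{u+v}{2}\sin\frac{u-v}{2}=\tfrac12(\cos v-\cos u)$, after which the product-to-sum formulas produce exactly the zeroth-order term needed to cancel the contribution of $\Delta\eta$.

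The main obstacle is bookkeeping rather than ideas: one must carry a large number of terms through the two substitutions without sign errors, and in particular handle $\partial_x\partial_y\phi$ by differentiating $T_1=0$ in $y$ rather than by commuting operators formally. The one genuinely structural step — and the point where the special integrable nonlinearity enters — is the use of $k_n\bar k_n=1$ to collapse $AB$ and $\sin\frac{u+v}{2}\sin\frac{u-v}{2}$ into expressions in $\cos u$ and $\cos v$; it is precisely this collapse that must match the zeroth-order term coming from the equation for $\eta$, and it is the computation I would check most carefully.
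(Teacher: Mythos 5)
Your method coincides with the paper's, just carried out in the opposite order: the paper first proves the \emph{nonlinear} identity
\[
\partial_x A_2 - i\partial_y A_1 = -\Delta v + \sin v + c_1 A_1 + c_2 A_2,\qquad c_2=\tfrac{\bar k_n}{2}\cos\tfrac{u+v}{2}+\tfrac{k_n}{2}\cos\tfrac{u-v}{2},
\]
for the residuals $A_1,A_2$ of \eqref{uv}, using exactly your two sum/difference relations, and then linearizes it (noting $\delta A_1=-T_1$, $\delta A_2=-T_2$); your expansion of $\partial_x T_2-CT_2$ under $T_1\equiv 0$ is that linearization written out term by term. Your bookkeeping is correct: the $\phi$-derivative terms cancel as you say, and the $\phi$-coefficient $i\partial_y C+\partial_x D$ does vanish once the two clean relations are inserted.

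The problem is the final step, the one you deferred. Completing it with your own formulas: the clean relations give
\[
i\partial_y D+\partial_x C=-k_n\bar k_n\sin\tfrac{u+v}{2}\sin\tfrac{u-v}{2}=\tfrac12\left(\cos u-\cos v\right),\qquad D^2-C^2=-AB=-\tfrac12\left(\cos u+\cos v\right),
\]
so the $\eta$-coefficient equals $-\cos v$, and what your computation actually yields is
\[
\partial_x T_2-CT_2=\Delta\eta-\eta\cos v\qquad(\text{when }T_1\equiv 0).
\]
Hence the ODE \eqref{T2} follows from $\Delta\eta=\eta\cos v$, the linearized equation at $v$, and \emph{not} from the stated hypothesis $\Delta\eta=\eta\cos u$, which leaves the uncancelled residue $\eta\left(\cos u-\cos v\right)$. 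Your claim that the product-to-sum collapse "produces exactly the zeroth-order term needed" is therefore false for the statement as literally written. To be fair, this mismatch is inherited from the paper: its own identity produces $-\Delta v+\sin v$, whose linearization is $-\Delta\eta+\eta\cos v$, so the hypothesis of Lemma~\ref{time} contains a $u$/$v$ slip (consistently, the roles of the two potentials must be interchanged for Lemma~\ref{time} and Lemma~\ref{A} to fit together). But a blind proof of the literal statement cannot close, and the place where it fails is precisely the cancellation you postponed checking.
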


\begin{proof}
Let $\beta=\bar{k}_{n}.$ Consider the system%
\[
\left\{
\begin{array}
[c]{l}%
-\partial_{x}u-i\partial_{y}v+\beta\sin\frac{u+v}{2}+\beta^{-1}\sin\frac
{u-v}{2}=0\\
-i\partial_{y}u-\partial_{x}v-\beta\sin\frac{u+v}{2}+\beta^{-1}\sin\frac
{u-v}{2}=0
\end{array}
\right.
\]
Denoting the right hand side of the first equation by $A_{1}$, and that of the
second equation by $A_{2},$ we have%
\begin{align*}
\partial_{x}A_{2}-i\partial_{y}A_{1}  &  =-\Delta v-\beta\cos\frac{u+v}%
{2}\left(  \frac{\partial_{x}u+\partial_{x}v}{2}\right)  +\beta^{-1}\cos
\frac{u-v}{2}\left(  \partial_{x}u-\partial_{x}v\right) \\
&  -\beta i\cos\frac{u+v}{2}\left(  \frac{\partial_{y}u+\partial_{y}v}%
{2}\right)  -\beta^{-1}i\cos\frac{u-v}{2}\frac{\partial_{y}u-\partial_{y}v}%
{2}\\
&  =-\Delta v-\frac{\beta}{2}\cos\frac{u+v}{2}\left(  \partial_{x}%
u+\partial_{x}v+i\left(  \partial_{y}u+\partial_{y}v\right)  \right) \\
&  +\frac{\beta^{-1}}{2}\cos\frac{u-v}{2}\left(  \partial_{x}u-\partial
_{x}v-i\left(  \partial_{y}u-\partial_{y}v\right)  \right) \\
&  =-\Delta v-\frac{\beta}{2}\cos\frac{u+v}{2}\left(  2\beta^{-1}\sin
\frac{u-v}{2}-A_{1}-A_{2}\right) \\
&  +\frac{\beta^{-1}}{2}\cos\frac{u-v}{2}\left(  2\beta\sin\frac{u+v}{2}%
-A_{1}+A_{2}\right) \\
&  =-\Delta v+\sin v+A_{1}\left(  \frac{\beta}{2}\cos\frac{u+v}{2}-\frac
{\beta^{-1}}{2}\cos\frac{u-v}{2}\right) \\
&  +A_{2}\left(  \frac{\beta}{2}\cos\frac{u+v}{2}+\frac{\beta^{-1}}{2}%
\cos\frac{u-v}{2}\right)  .
\end{align*}
Differentiating this equation in $u,v$, we get the desired $\left(
\ref{T2}\right)  .$
\end{proof}

\begin{proposition}
$\xi$ is well defined in $\mathbb{R}^{2}.$ Near each point $\left(
x_{0},y_{0}\right)  \in S,$ $\xi\left(  x,y\right)  =O\left(  x-x_{0}\right)
.$ Moreover, $T\xi=0$ in $\mathbb{R}^{2}.$
\end{proposition}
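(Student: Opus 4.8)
The plan is to handle the two operators in turn: first show that the integral in the definition of $\xi$ produces, after exponentiation, a continuous function that vanishes simply along $S$ (this yields both well-definedness and the $O(x-x_0)$ bound at once), and then invoke Lemma \ref{time} together with the asymptotics at $x=-\infty$ to force $T\xi\equiv 0$.

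For well-definedness and the growth bound, I would fix $y$ and study $\operatorname{Re}(\Gamma(\cdot,y))$ along the horizontal line, which meets $S$ in finitely many points. The crux is a local residue computation at each such $x_0$. Writing $w=\tau/\gamma$, the only singular factor in $\Gamma=2\bar k_n(1-\tfrac{g}{f}w)^2/[(1+g^2/f^2)(1+w^2)]$ is $(1+w^2)^{-1}$, and $1+w^2$ vanishes at $x_0$ because $w=\pm i$ there. Matching the dominant exponentials of $f,g,\gamma,\tau$ exactly as in the proof of Lemma \ref{gamma}, one checks that this zero is simple and that $\Gamma$ has a simple pole in $x$ with residue $\tfrac{1}{k_{j_0}p_{j_0}}$. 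Since $p_{j_0}^2+q_{j_0}^2=1$ gives $\tfrac{1}{k_{j_0}}=p_{j_0}-iq_{j_0}$, we get the decisive identity $\operatorname{Re}\tfrac{1}{k_{j_0}p_{j_0}}=1$. Hence $\operatorname{Re}(\Gamma(s,y))=(s-x_0)^{-1}+(\text{bounded})$ near each pole, so $\int_{-\infty}^{x}\operatorname{Re}(\Gamma)\,ds=\ln|x-x_0|+(\text{continuous})$ locally. Exponentiating, $\xi$ is continuous with $\xi(x,y)=O(x-x_0)$; equivalently, from $L\xi=0$ one has $\partial_x\ln\xi=\operatorname{Re}(\Gamma-\bar k_n)\sim(x-x_0)^{-1}$, so the log-divergence of the integral is exactly the simple zero of $\xi$, and there is no genuine singularity.

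For $T\xi=0$ I would apply Lemma \ref{time} with $\eta\equiv 0$ and $\phi=\xi$: then $T_1=L\xi=0$ by construction and $\Delta\eta=\eta\cos u$ holds trivially, so the lemma gives $\partial_x(T\xi)=\bigl(\tfrac{\bar k_n}{2}\cos\tfrac{u+v}{2}+\tfrac{k_n}{2}\cos\tfrac{u-v}{2}\bigr)(T\xi)$, a homogeneous first-order linear ODE in $x$ for $T\xi$. I then read off the boundary value at $-\infty$: since $\Gamma\to 0$ there and $\xi\to e^{-p_nx-q_ny}$, we have $\partial_y\xi\to -q_n\xi$ while $\operatorname{Im}(\Gamma-\bar k_n)\to q_n$, so $T\xi=i\partial_y\xi+i\operatorname{Im}(\Gamma-\bar k_n)\xi\to 0$. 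On the leftmost component of $\{y=\text{const}\}\setminus S$ the integrating factor then forces $T\xi\equiv 0$.

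The \emph{hard part} will be propagating $T\xi\equiv 0$ across $S$, where the ODE coefficient $\tfrac{\bar k_n}{2}\cos\tfrac{u+v}{2}+\tfrac{k_n}{2}\cos\tfrac{u-v}{2}$ blows up like $(x-x_0)^{-1}$ (because $\cos\tfrac{u\pm v}{2}\sim(\gamma^2+\tau^2)^{-1}$), so it is not locally integrable and standard uniqueness does not apply directly. My first observation is that $T\xi$ is nevertheless bounded near $S$: from $\xi=O(x-x_0)$ and $\operatorname{Im}(\Gamma)=O((x-x_0)^{-1})$, both $\operatorname{Im}(\Gamma)\xi$ and $\partial_y\xi$ are $O(1)$, so $T\xi$ extends continuously across $S$ with the value $0$ inherited from the left. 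I then expect to need a residue computation for this coefficient, parallel to the one for $\Gamma$, identifying $x_0$ as a regular singular point and showing that a continuous, bounded solution vanishing on the left branch must stay on the identically-zero branch rather than switching on a nontrivial $(x-x_0)^{C}$ mode. Iterating over the finitely many points of $S$ on each horizontal line then gives $T\xi\equiv 0$ on $\mathbb{R}^2$.
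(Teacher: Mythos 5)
Your skeleton---control $\xi$ near $S$ through the singularity of $\Gamma$, use Lemma \ref{time} plus the $x\to-\infty$ asymptotics to force $T\xi=0$, then propagate across $S$---is the same as the paper's, but two of your steps have genuine gaps, and they sit exactly at the hard points of the proposition.

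First, the residue computation you propose ``at each such $x_0$'' is not available at all points of $S$. The matching of dominant exponentials in Lemma \ref{gamma} requires $x_j^2+y_j^2\to+\infty$: only then is a single $\eta_{j_0}$ bounded while all the others diverge, so that one exponential balance dominates $f,g,\gamma,\tau$. At a point of $S$ lying in a compact region no term dominates, and neither the simplicity of the zero of $\gamma^2+\tau^2$ nor the value of the residue of $\Gamma$ follows from that computation; the paper itself only asserts $\partial_x\Gamma^{-1}\neq0$ ``when $y$ is large,'' and even near infinity Lemma \ref{gamma} yields a vanishing order \emph{close} to $1$, not equal to $1$. This is precisely why the paper argues in the opposite order to yours: it first proves $T\xi=0$ in the far region $|y|>C_0$ (where well-definedness is known), then uses the equation $T\xi=0$ itself---the expansion $\left(\ref{g0}\right)$--$\left(\ref{ga}\right)$ together with analyticity of $\Gamma^{-1}$---to show that the vanishing order $\alpha$ of $\xi$ is constant along each connected component of $S$, concludes $\alpha\equiv1$ on unbounded components by sending $y\to+\infty$, and must invoke a separate deformation-of-parameters argument for possible bounded components of $S$, where it obtains only $\alpha>0$. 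Your plan (well-definedness everywhere first, by pointwise residues, and only then $T\xi=0$) cannot be executed in that order, because the pointwise residue identity is exactly the missing ingredient; the vanishing order at the compact part of $S$ is only accessible \emph{through} $T\xi=0$.

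Second, your ODE uniqueness inferences are incomplete as stated. At $x=-\infty$ your computation gives $\partial_y\xi+q_n\xi=o(\xi)$ and $\operatorname{Im}(\Gamma-\bar k_n)-q_n=o(1)$, hence only $T\xi=o(\xi)$; since $\xi\sim e^{-p_nx-q_ny}$ blows up there, this is weaker than ``$T\xi\to0$.'' To convert $o(\xi)$ into $T\xi\equiv0$ you need the observation (which is what the paper's $\rho$-trick encodes) that $T\xi$ and $\xi$ satisfy the \emph{same} first-order ODE in $x$---the coefficient in $\left(\ref{T2}\right)$ is the coefficient of $L$---so that $T\xi/\xi$ is constant in $x$ on each component and can be evaluated in the limit. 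The same defect is fatal to your crossing of $S$: on $(s_1,s_2)$ every solution of the ODE is a constant multiple of $\xi$, and since $\alpha>0$ \emph{all} of them are bounded, continuous, and vanish at $s_1^{+}$; so your criterion ``a continuous, bounded solution vanishing on the left must stay on the zero branch'' does not distinguish the zero solution from a nonzero multiple of $\xi$. What is needed is again the ratio statement $T\xi/\xi\to0$ as $x\downarrow s_1$, which the paper extracts from the precise expansion $\xi\approx\beta(y)\,|x-s_1(y)|^{\alpha}$ and the cancellation identity $\left(\ref{ga}\right)$---i.e., from the vanishing-order machinery that your outline omits. So both the initial determination of $T\xi=0$ and, more seriously, the passage across $S$ and the treatment of the compact part of $S$ remain open in your proposal.
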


\begin{proof}
Let $\left(  x_{0},y_{0}\right)  \in S.$ First we consider the case that
$\left\vert y_{0}\right\vert $ is large. From Lemma \ref{gamma}, we infer that
near $x_{0},$ $\xi\left(  x,y_{0}\right)  =O\left(  \left\vert x-x_{0}%
\right\vert ^{\alpha}\right)  ,$ where $\alpha$ is close to $1.$ Hence $\xi$
is well defined for $\left\vert y\right\vert $ large, say $\left\vert
y\right\vert >C_{0}.$

We wish to show that in the region $\Omega_{1}:=\left\{  \left(  x,y\right)
:y>C_{0}\right\}  ,$ $T\xi=0.$ Let $y_{1}\in\lbrack C_{0},+\infty).$ Suppose
$S\cap\left\{  \left(  x,y_{1}\right)  :x\in\mathbb{R}\right\}  =\left\{
s_{1},...,s_{k}\right\}  ,$ where $s_{j}<s_{j+1}$ and they depends on $y_{1}.$
Let $x_{1}\in\left(  -\infty,s_{1}\right)  $ and $\rho$ be a function to be
determined. Consider the the problem
\begin{equation}
\left\{
\begin{array}
[c]{l}%
T\left(  \rho\xi\right)  =0,\text{ for }x=x_{1},\\
\text{ }\rho\left(  y_{1}\right)  =1.
\end{array}
\right.  \label{t}%
\end{equation}
Note that
\[
T\left(  \rho\xi\right)  =\rho^{\prime}\xi+\left(  \partial_{y}\xi-\left(
\operatorname{Im}\Gamma-\operatorname{Im}\bar{k}_{n}\right)  \xi\right)
\rho.
\]
Therefore the problem $\left(  \ref{t}\right)  $ is an ODE for $\rho$ and has
a unique solution $\rho$ in a small interval $\left(  y_{1}-\delta
,y_{1}+\delta\right)  $. Using Lemma \ref{time}, we know that $T\left(
\rho\xi\right)  =0$ in the strip $\Omega_{2}:=\left(  -\infty,x_{1}%
+\delta\right)  \times\left(  y_{1}-\delta,y_{1}+\delta\right)  .$ Hence in
this region,
\[
\rho^{\prime}\xi+\left(  \partial_{y}\xi-\left(  \operatorname{Im}%
\Gamma-\operatorname{Im}\bar{k}_{n}\right)  \xi\right)  \rho=0.
\]
Dividing both sides by $\xi$ and letting $x\rightarrow-\infty,$ we find that
$\rho^{\prime}\left(  y\right)  =0$, thus $\rho\left(  y\right)  =1.$ This
implies that the function $\xi$ solves $T\left(  \xi\right)  =0$ in
$\Omega_{2}.$

Next we proceed to analyze the asymptotic behavior of $\xi$ near the left most
singularity $s_{1},$ when $\partial_{x}\Gamma^{-1}$ is nonzero at $s_{1}$(This
holds when $y$ is large). Assume $\xi$ has the form $\beta\left(  y\right)
e^{y\operatorname{Im}\bar{k}_{n}}\left(  s_{1}\left(  y\right)  -x\right)
^{\alpha\left(  y\right)  },\alpha,\beta$ are unknown functions, and
$\beta\neq0,\alpha$ is close to $1.$ We call $\alpha$ the vanishing order of
$\xi$. Then
\begin{align}
T\left(  \xi\right)  e^{-y\operatorname{Im}\bar{k}_{n}}  &  =\partial_{y}%
\xi-\left(  \operatorname{Im}\Gamma-\operatorname{Im}\bar{k}_{n}\right)
\xi\nonumber\\
&  =\beta^{\prime}\left(  y\right)  \left(  s_{1}-x\right)  ^{\alpha\left(
y\right)  }+\beta\left(  y\right)  \alpha\left(  y\right)  \left(
s_{1}-x\right)  ^{\alpha\left(  y\right)  -1}s_{1}^{\prime}\nonumber\\
&  +\beta\left(  y\right)  \left(  s_{1}-x\right)  ^{\alpha\left(  y\right)
}\ln\left(  s_{1}-x\right)  \alpha^{\prime}\left(  y\right) \nonumber\\
&  -\beta\left(  y\right)  \left(  s_{1}-x\right)  ^{\alpha\left(  y\right)
}\operatorname{Im}\Gamma\nonumber\\
&  =0. \label{g0}%
\end{align}
Here $s_{1}$ is evaluated at $y.$ In the last identity, dividing both sides
with $\left(  s_{1}-x\right)  ^{\alpha\left(  y\right)  -1}$ and letting
$x\rightarrow s_{1},$ we obtain%
\begin{equation}
\alpha\left(  y\right)  s_{1}^{\prime}-\left[  \left(  s_{1}-x\right)
\operatorname{Im}\Gamma\right]  |_{x=s_{1}}=0. \label{ga}%
\end{equation}
Using the real analyticity of $\Gamma^{-1},$ we can expand $\Gamma$ around
$x=s_{1}.$ Dividing $\left(  \ref{g0}\right)  $ by $\left(  s_{1}\left(
y\right)  -x\right)  ^{\alpha\left(  y\right)  }$ and using $\left(
\ref{ga}\right)  ,$ we find that $\alpha^{\prime}\left(  y\right)  =0.$ Hence
$\alpha$ is a constant. When $y\rightarrow+\infty,$ we know from Lemma
\ref{gamma} that $\alpha\left(  y\right)  \rightarrow1.$ It follows that
$\alpha$ is identically equal to one along each unbounded connected component
of $S$ containing $s_{1}.$

In principle, $S$ could have bounded connected components(We don't know
whether this can actually happen). Assume now that $s_{1}$ is belonging to a
bounded component $B_{1}$. Using the previous argument, one can first prove
that the vanishing order $\alpha$ of $\xi$ in $B_{1}$ is constant. We now show
that $\alpha$ is actually positive. Indeed, observe that the functions
$f,g,\gamma,\tau$ contain parameters $k_{1},...,k_{n}.$ We can deform these
parameters to the situation that all $k_{j}$ are close to $k_{n}.$ For a
generic deformation, the vanishing order of the corresponding functions $\xi
$(also depends on $k_{j}$) will not change sign(Note that we don't know
whether the vanish order will change along this deformation). But in the case
that $k_{j}$ are all close to $k_{n}$, bounded components of singular set will
not appear and thus the vanishing order are equal to one, thus positive. This
tells us that $\alpha>0.$

Now we have proved that $\xi$ solves $T\xi=0$ for $x<$ $s_{1}\left(  y\right)
.$ To prove that $T\xi=0$ for any $x_{1}\in\left(  s_{1}\left(  y\right)
,s_{2}\left(  y\right)  \right)  ,$ we still consider the function $\phi
:=\rho\left(  y\right)  \xi\left(  x,y\right)  ,$ with $\rho\left(  y\right)
=1.$ One can solve the problem $T\phi=0$ for $x=x_{1}.$ Due to the asymptotic
behavior of $\phi$ at $x\rightarrow s_{1}\left(  y\right)  ,$ $\rho^{\prime
}=0$ and hence $\rho=1.$ Arguing in this way, we finally prove that $T\xi=0$
in $\mathbb{R}^{2}.$ The proof is thus completed.
\end{proof}

With the vanishing order of $\xi$ being understood, we proceed to solve the
system $\left(  \ref{LB}\right)  ,$ with $\eta$ being a bounded kernel of the
linearized elliptic sine-Gordon equation%
\begin{equation}
\Delta\eta+\eta\cos u=0. \label{linear}%
\end{equation}

For each fixed $y,$ the first inhomogeneous equation in $\left(
\ref{LB}\right)  $ has a solution of the form
\begin{equation}
\phi\left(  x,y\right)  =\xi\left(  x,y\right)  \int_{-\infty}^{x}\xi
^{-1}\left(  s,y\right)  M\eta ds. \label{fi}%
\end{equation}

\begin{lemma}
\label{A}Let $\eta$ be a bounded solution of $\left(  \ref{linear}\right)  .$
The function $\phi$ defined by $\left(  \ref{fi}\right)  $ satisfies system
$\left(  \ref{LB}\right)  .$ As a consequence, $\phi$ is a kernel of the
linearized elliptic sine-Gordon equation at $v,$ that is,
\begin{equation}
\Delta\phi+\phi\cos v=0. \label{jo}%
\end{equation}

\end{lemma}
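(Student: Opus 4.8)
The plan is to establish the two equations of $\left(  \ref{LB}\right)  $ in turn and then read off $\left(  \ref{jo}\right)  $ from the compatibility of the system.

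\textbf{The first equation.} I would first observe that $L\phi=M\eta$ holds essentially by construction: formula $\left(  \ref{fi}\right)  $ is the variation-of-parameters solution of the first-order (in $x$) inhomogeneous ODE $L\phi=M\eta$, assembled from the homogeneous solution $\xi$, which satisfies $L\xi=0$ by the preceding Proposition. Differentiating $\left(  \ref{fi}\right)  $ in $x$ and using $\partial_{x}\xi=\operatorname{Re}\left(  \Gamma-\bar{k}_{n}\right)  \xi$ gives $\partial_{x}\phi=\operatorname{Re}\left(  \Gamma-\bar{k}_{n}\right)  \phi+M\eta$, that is $L\phi=M\eta$. Along the way I would record that $\phi$ is bounded as $x\rightarrow-\infty$: since $\operatorname{Re}\bar{k}_{n}=p_{n}>0$, the factor $\xi^{-1}$ decays exponentially there while $M\eta$ stays bounded, so the product in $\left(  \ref{fi}\right)  $ remains bounded.

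\textbf{The second equation.} Put $T_{1}=L\phi-M\eta$ and $T_{2}=T\phi-N\eta$, so $T_{1}\equiv0$ by the previous step. Because $\eta$ solves $\left(  \ref{linear}\right)  $, Lemma \ref{time} applies and yields $\partial_{x}T_{2}=\operatorname{Re}\left(  \Gamma-\bar{k}_{n}\right)  T_{2}$, which is precisely the homogeneous equation solved by $\xi$. Hence on every maximal $x$-interval disjoint from the singular set $S$ one has $T_{2}=c\left(  y\right)  \xi$ with $c$ independent of $x$. To fix $c$, I would look at $x\rightarrow-\infty$: there $\Gamma\rightarrow0$, so $T_{2}$ tends to $i\partial_{y}\phi+iq_{n}\phi+\partial_{x}\eta-p_{n}\eta$, which is bounded, whereas $\xi\sim e^{-p_{n}x-q_{n}y}\rightarrow+\infty$. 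Boundedness of $T_{2}$ then forces $c\equiv0$ on the leftmost interval $\left(  -\infty,s_{1}(y)\right)  $, so $T_{2}=0$ there.

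\textbf{Crossing the singular set.} The delicate point, which I expect to be the main obstacle, is to propagate $T_{2}=0$ through each point of $S$, where the coefficient $\operatorname{Re}\left(  \Gamma-\bar{k}_{n}\right)  $ of the governing ODE blows up and $c\left(  y\right)  $ could a priori jump. Here I would invoke the Proposition once more: $\xi$ vanishes to a definite positive order (close to $1$) at every point of $S$, and $\phi$ from $\left(  \ref{fi}\right)  $ inherits a correspondingly controlled behavior there. Matching these vanishing orders against $T_{2}=c\left(  y\right)  \xi$, together with $c=0$ on the first interval, rules out a nonzero jump of $c$ across a singularity. Iterating over the finitely many components of $S$ on each horizontal line gives $c\equiv0$, hence $T_{2}\equiv0$ and $T\phi=N\eta$; combined with the first step this is the full system $\left(  \ref{LB}\right)  $.

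\textbf{The consequence.} Finally, once $\left(  \phi,\eta\right)  $ satisfies $\left(  \ref{LB}\right)  $ with $\eta$ a bounded kernel at $u$, equation $\left(  \ref{jo}\right)  $ follows by linearizing the same cross-differentiation that turns the nonlinear B\"{a}cklund system $\left(  \ref{uv}\right)  $ into the sine-Gordon equation for $v$. Concretely, differentiating the computation in the proof of Lemma \ref{time} in $\left(  u,v\right)  $, i.e. forming $\partial_{x}T_{2}-i\partial_{y}T_{1}$ for the linearized system, produces the linearized sine-Gordon operator at $v$ acting on $\phi$ (the left-hand side of $\left(  \ref{jo}\right)  $) up to terms proportional to $T_{1}$ and $T_{2}$, the contribution of the $u$-variation being controlled by $\left(  \ref{linear}\right)  $. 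Since $T_{1}=T_{2}=0$, this collapses to $\left(  \ref{jo}\right)  $, as claimed.
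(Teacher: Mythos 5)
Your skeleton coincides with the paper's: the first equation of \eqref{LB} holds by construction of \eqref{fi}, Lemma \ref{time} turns the second equation into the scalar ODE $\partial_{x}T_{2}=\operatorname{Re}\left(\Gamma-\bar{k}_{n}\right)T_{2}$, which is the same ODE solved by $\xi$, the blow-up of $\xi$ as $x\rightarrow-\infty$ against the boundedness of $T_{2}$ kills the proportionality factor, and \eqref{jo} follows from the cross-differentiation identity underlying Lemma \ref{time}. The genuine gap sits exactly at the step you yourself flag as delicate, and the fix you propose does not work. On the component to the right of a singular point $s_{1}(y)$ you have $T_{2}=c(y)\xi$, and you want to deduce $c=0$ from $T_{2}\equiv0$ on the left by ``matching vanishing orders''. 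But $\xi$ itself vanishes on $S$ to positive order $\alpha$ (equal to $1$ along unbounded components, by the Proposition preceding the lemma), so $c(y)\xi$ vanishes at $s_{1}$ for \emph{every} value of $c$: the left limit $0$ and the right limit $c\,\xi(s_{1})=0$ agree automatically, and continuity of $T_{2}$ across $S$ puts no constraint on $c$. This is a real failure of ODE uniqueness, not a technicality: near $s_{1}$ the coefficient behaves like $(x-s_{1})^{-1}$ with positive residue, so all solutions of the ODE vanish at $s_{1}$ and distinct solutions branch out of the singular point. To exclude a jump of $c$ you would need $T_{2}$ to vanish at $S$ to order strictly higher than $\alpha$ (for $\alpha=1$, differentiability of $T_{2}$ across $S$ with vanishing derivative), and that requires a genuine analysis of $\phi$ near $S$ --- where the coefficients of $T$, $N$, $M$ all blow up --- which your proposal does not supply.

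The paper organizes the proof so that it never tries to continue ``$T_{2}=0$'' across $S$. It first upgrades $\xi$ to a globally $C^{1}$ function $\xi^{\ast}$ (flipping signs across the components of $\mathbb{R}^{2}\setminus S$) satisfying $L\xi^{\ast}=T\xi^{\ast}=0$ in all of $\mathbb{R}^{2}$; this is the content of the preceding Proposition, where all the work on the singular set is concentrated (the expansion leading to \eqref{ga}, constancy of the vanishing order, and the deformation argument for bounded components of $S$). Then, for an arbitrary point $(x_{1},y_{1})\notin S$, it introduces the corrector $\Phi=\phi+\rho(y)\xi^{\ast}$, with $\rho$ solving an auxiliary ODE in $y$ chosen so that $T\Phi=N\eta$ holds on the vertical segment $x=x_{1}$; Lemma \ref{time} then propagates the full system for $\Phi$ in $x$, and sending $x\rightarrow-\infty$ forces $\rho^{\prime}=0$, i.e. $\Phi=\phi$, which is the conclusion. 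Your leftmost-interval limit is the analogue of this last step, but without the corrector $\xi^{\ast}$ and the global statement of the Proposition standing behind it, you have no mechanism for transporting information across the singular curves --- and that transport is precisely what the lemma needs.
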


\begin{proof}
By the definition of $\xi,$ it is always nonnegative. By multiplying $\xi$ by
$+1$ or $-1$ in different connected components of $\mathbb{R}^{2}\backslash
S,$ we get a $C^{1}$ function $\xi^{\ast}$ solving $L\xi^{\ast}=T\xi^{\ast
}=0.$ We wish to show that $\phi$ solves $T\phi=N\eta.$ Let $\left(
x_{1},y_{1}\right)  \in\mathbb{R}^{2}\backslash S.$ Consider the function
\[
\Phi\left(  x,y\right)  :=\phi\left(  x,y\right)  +\rho\left(  y\right)
\xi^{\ast}\left(  x,y\right)  ,
\]
where $\rho$ satisfies
\[
\left\{
\begin{array}
[c]{l}%
\rho^{\prime}\left(  y\right)  \xi^{\ast}\left(  x,y\right)  =-T\phi
+N\eta,\text{ for }x=x_{1},y\in\left(  y_{1}-\delta,y_{1}+\delta\right) \\
\rho\left(  y_{1}\right)  =0.
\end{array}
\right.
\]
Then $T\Phi=0$ for $x=x_{1},y\in\left(  y_{1}-\delta,y_{1}+\delta\right)  .$
Using Lemma \ref{time}, for $y\in\left(  y_{1}-\delta,y_{1}+\delta\right)  ,$
$\Phi$ satisfies the system
\[
\left\{
\begin{array}
[c]{c}%
L\Phi=M\eta,\\
T\Phi=N\eta.
\end{array}
\right.
\]
Hence
\[
\rho^{\prime}\left(  y\right)  \xi^{\ast}\left(  x,y\right)  =-T\phi
+N\eta,\text{ for }y\in\left(  y_{1}-\delta,y_{1}+\delta\right)  \text{.}%
\]
For each fixed $y,$ sending $x$ to $-\infty$ in the above equation, we get
$\rho^{\prime}\left(  y\right)  =0.$ Hence $\rho=0$ and $\Phi$ satisfies
system $\left(  \ref{LB}\right)  .$ It then follows from the linearization of
the B\"{a}cklund transformation that $\phi$ satisfies $\left(  \ref{jo}%
\right)  .$ The proof is completed.
\end{proof}

Now we are ready to prove the nondegeneracy theorem.

\begin{proof}
[Proof of Theorem \ref{Main}]Let us fixed a $2n$-end solution $u=U_{n}$ of
$\left(  \ref{esg}\right)  .$ Suppose $\eta$ is nontrivial bounded kernel of
the linearized operator. Note that in the definition of $U_{n},$ there are
$2n$ real parameters $\operatorname{Re}k_{j},\eta_{j}^{0},j=1,...,n.$
Differentiating with respect to these parameters in the elliptic sine-Gordon
equation, we obtain $2n$ linearly independent solutions of the equation
$\left(  \ref{linear}\right)  ,$ denoting them by $\zeta_{1},...,\zeta_{2n}.$
By adding suitable linear combinations of $\zeta_{j},j=1,...,2n,$ if
necessary, we can assume that $\eta\left(  x,y\right)  $ decays to zero
exponentially fast, as $x\rightarrow-\infty$. Applying Lemma \ref{A}, we get a
corresponding kernel $\phi_{n-1}$ of the linearized operator at the function
$4\arctan\frac{\tau}{\gamma},$ which can be regarded as a $n-1$-soliton type
solution of elliptic sinh-Gordon equation having singularities. Moreover,
$\phi_{n-1}$ is bounded and decays to zero as $x\rightarrow-\infty.$

Now similarly as before, $4\arctan\frac{\tau}{\gamma}$ is the B\"{a}cklund
transformation of an $n-2$-soliton type solution, which will be denoted by
$4\arctan\frac{\tau_{n-2}}{\gamma_{n-2}}.$ Repeating this procedure, we may
consider the B\"{a}cklund transformation between $4\arctan\frac{\tau_{j}%
}{\gamma_{j}}$ and $4\arctan\frac{\tau_{j-1}}{\gamma_{j-1}},$ where
$4\arctan\frac{\tau_{j}}{\gamma_{j}}$ is a $j$-soliton, and $4\arctan
\frac{\tau_{0}}{\gamma_{0}}=0.$ Linearizing these B\"{a}cklund transformation
and solving them similarly as in Lemma \ref{A}(One also need to be careful
about the point singularities in these systems), we finally get a bounded
kernel $\varphi_{0}$ of the operator
\[
\Delta\varphi_{0}-\varphi_{0}=0.
\]
Moreover, we may assume that $\varphi_{0}$ is decaying to zero as
$x\rightarrow-\infty.$ Hence $\varphi_{0}=0.$ This together with an analysis
of the reverse B\"{a}cklund transformation ultimately tell us that $\eta=0.$
This finishes the proof.
\end{proof}

\section{Inverse scattering transform and the classification of multiple-end
solutions}

The rest of the paper will be devoted to the proof of Theorem \ref{Main2}. We
consider the elliptic sine-Gordon equation in the form
\begin{equation}
\partial_{x}^{2}u+\partial_{y}^{2}u=\sin u,\text{ }0<u<2\pi. \label{SG2}%
\end{equation}
Multiple-end solutions of $\left(  \ref{SG}\right)  $ are corresponding to
those solutions of $\left(  \ref{SG2}\right)  $ whose $\pi$ level sets are
asymptotic to finitely many half straight lines at infinity. Along these half
lines, the solutions resemble the one dimensional heteroclinic solution
$\arctan e^{s}$ in the transverse direction. In this section, we will classify
these solutions using the inverse scattering transform framework developed in
\cite{Gut}.

Let $\sigma_{i},i=1,2,3$ be the Pauli spin matrices, that is,
\[
\sigma_{1}=\left[
\begin{array}
[c]{cc}%
0 & 1\\
1 & 0
\end{array}
\right]  ,\sigma_{2}=\left[
\begin{array}
[c]{cc}%
0 & -i\\
i & 0
\end{array}
\right]  ,\sigma_{3}=\left[
\begin{array}
[c]{cc}%
1 & 0\\
0 & -1
\end{array}
\right]  .
\]
Let $\lambda$ be a complex spectral parameter. The equation $\left(
\ref{SG2}\right)  $ has a Lax pair
\begin{align}
\Phi_{x}  &  =\frac{1}{2}\left(  \left(  \frac{i\lambda}{2}+\frac{\cos
u}{2i\lambda}\right)  \sigma_{3}-\frac{i}{2}\left(  u_{x}+iu_{y}\right)
\sigma_{2}-\frac{i\sin u}{2\lambda}\sigma_{1}\right)  \Phi,\label{Lax1}\\
\Phi_{y}  &  =\frac{1}{2}\left(  -\left(  \frac{\lambda}{2}+\frac{\cos
u}{2\lambda}\right)  \sigma_{3}+\frac{1}{2}\left(  u_{x}-iu_{y}\right)
\sigma_{2}-\frac{\sin u}{2\lambda}\sigma_{1}\right)  \Phi. \label{Lax2}%
\end{align}
Let $k\left(  \lambda\right)  =\lambda-\frac{1}{\lambda}.$ Note that due to
the asymptotic behavior of $u,$ as $x\rightarrow\pm\infty,$ the coefficient
matrix of the righthand side of $\left(  \ref{Lax1}\right)  $ tends to the
constant matrix $\frac{i}{4}k\sigma_{3}.$ Let $\Phi_{\pm}$ be the solution of
$\left(  \ref{Lax1}\right)  $ such that $\Phi_{\pm}\left(  x,y\right)
\sim\exp\left(  \frac{i}{4}k\sigma_{3}x\right)  ,$ as $x\rightarrow\pm\infty.$
Note that $\Phi_{+}$ and $\Phi_{-}$ are solutions of the same ODE system. For
$\lambda\in\mathbb{R},$ they are related by
\[
\Phi_{+}\left(  x,y,\lambda\right)  =\Phi_{-}\left(  x,y,\lambda\right)
\left[
\begin{array}
[c]{cc}%
a\left(  \lambda\right)  & b\left(  \lambda\right) \\
-b\left(  -\lambda\right)  & a\left(  -\lambda\right)
\end{array}
\right]  .
\]
The functions $a\left(  \lambda,y\right)  ,b\left(  \lambda,y\right)  $ are
called the scattering data, which is a priori depending on $y$ and the
spectral parameter $\lambda.$ In equation $\left(  \ref{Lax2}\right)  ,$
sending $x\rightarrow-\infty,$ we know that they obey the following evolution
laws along the $y$ direction:%
\begin{align*}
a\left(  \lambda,y\right)   &  =a\left(  \lambda,0\right)  ,\\
b\left(  \lambda,y\right)   &  =b\left(  \lambda,0\right)  \exp\left(
-\frac{1}{4}\left(  \lambda+\lambda^{-1}\right)  y\right)  .
\end{align*}
Since $u$ is a smooth bounded solution which looks like the gluing of finitely
many one dimensional heteroclinic solution as $\left\vert y\right\vert
\rightarrow+\infty,$ we must have $b\left(  \lambda,y\right)  =0$ for nonzero
$\lambda\in\mathbb{R}$(otherwise, it blows up exponentially fast).

Since $u-\pi$ is a multiple-end solution of $\left(  \ref{SG}\right)  ,$ there
exists a choice of parameters $p_{j},q_{j},\eta_{j}^{0}$ such that the zero
level set of the corresponding solution $U_{n}-\pi$ has the same asymptotic
lines as $u-\pi,$ as $y\rightarrow+\infty.$ We denote the $a$ part of the
scattering data of $U_{n}$ by $a_{U_{n}}\left(  \lambda,y\right)  ,$ and that
of $u$ by $a_{u}\left(  \lambda,y\right)  .$ Then since $U_{n}$ and $u$ have
the same asymptotic behavior as $y\rightarrow+\infty$, we must have
\[
a_{U_{n}}\left(  \lambda,y\right)  =a_{u}\left(  \lambda,y\right)  .
\]
The potentials $U_{n}$ and $u$ in the Lax pair can be recovered by the inverse
scattering procedure(See equations (14), (15) in \cite{Gut}). It follows that
$U_{n}$ and $u$ are two reflection-less potential having the same scattering
data. Therefore $u=U_{n}.$

\bigskip

\section{Morse index of the multiple-end solutions}

In this section, we shall compute the Morse index of the multiple-end
solutions through a deformation argument. We have proved that the multiple-end
solutions $U_{n}-\pi$ are the only $2n$-end solutions. Therefore, the space
$M_{n}$ of $2n$-end solutions endowed with the natural topology defined in
\cite{Michal} has exactly one connected component. We now know that they are
$L^{\infty}$ nondegenerate. Hence for fixed $n,$ the Morse index of all the
solutions in $M_{n}$ are same.

\begin{proposition}
\label{Morse}The Morse index of $U_{n}-\pi$ is equal to $n\left(  n-1\right)
/2.$
\end{proposition}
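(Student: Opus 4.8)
The plan is to combine the invariance of the Morse index over the moduli space (already obtained above from the classification together with the $L^{\infty}$-nondegeneracy of Theorem \ref{Main}) with an explicit count at a conveniently chosen configuration. Since $M_n$ is connected and every solution is $L^{\infty}$-nondegenerate, and since any $L^{2}$ kernel of the Jacobi operator $\mathcal{L}:=-\Delta+\cos U_{n}$ (equivalently $-\Delta-\cos(U_{n}-\pi)$, the second variation of the energy at $U_{n}-\pi$) would be a bounded kernel and hence a combination of the $\partial_{\eta_{j}^{0}}U_{n}$ — none of which lies in $L^{2}$, being of size $O(1)$ along the infinite nodal curves — no eigenvalue of $\mathcal{L}$ can cross $0$ in $L^{2}$ as the parameters vary. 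Thus the number of negative eigenvalues is the same for every $(p_{j},q_{j},\eta_{j}^{0})$, and it suffices to evaluate it at one configuration. I would choose the $n$ unit numbers $k_{j}=p_{j}+iq_{j}=e^{i\theta_{j}}$ to have pairwise distinct arguments and the translation parameters $\eta_{j}^{0}$ so that the $n$ asymptotically straight nodal lines of $U_{n}-\pi$ are in general position with all $\binom{n}{2}$ pairwise intersection points mutually far apart. Note that away from the ends $\cos U_{n}\to 1$, so $\mathcal{L}$ has essential spectrum $[1,\infty)$ and finitely many negative eigenvalues, with eigenfunctions decaying exponentially.

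The geometric heuristic to be made precise is that every negative direction of $\mathcal{L}$ is localized at a crossing. Along a single straight nodal line the solution is, to leading order, the one-dimensional heteroclinic $H$ in the transverse variable $s$, and the transverse operator $-\partial_{ss}+\cos(H+\pi)=-\partial_{ss}-\cos H$ has ground state the translation mode $H'>0$ at energy $0$, with a spectral gap up to $[1,\infty)$; adding the nonnegative tangential part $-\partial_{tt}$ shows that $\mathcal{L}$ is nonnegative in a single-line region, the only zero mode being the (non-$L^{2}$) translation. By contrast, near a crossing of lines $i$ and $j$ the explicit formula for $U_{n}$ shows (the remaining $n-2$ exponential groups being negligible there) that, after translation, $U_{n}$ converges to the four-end solution $U_{2}$ built from $k_{i},k_{j}$. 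Hence the local model at each crossing is the four-end Jacobi operator, and the first step is to show this operator has exactly one negative eigenvalue. This four-end computation is itself the single-crossing instance of the argument below, requiring only one explicit negative direction together with the single-line positivity just described.

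For the lower bound I would take, at each crossing $(i,j)$, the unique negative eigenfunction $\psi_{ij}$ of the local four-end operator, multiply it by a cutoff supported in a ball $B_{ij}$ about that crossing, and use the exponential decay of $\psi_{ij}$ to make the truncation error negligible. When the crossings are far apart the resulting $\binom{n}{2}$ functions have disjoint supports, so the second variation is negative definite on their span and $\mathrm{ind}(U_{n}-\pi)\ge\binom{n}{2}$. For the upper bound I would run an IMS-type localization: choose a partition of unity $\{\chi_{ij}\}\cup\{\chi_{0}\}$ subordinate to the balls $B_{ij}$ and their complement $B_{0}$, with gradients $O(\mathrm{dist}^{-1})$ and hence uniformly small, so that the localization formula $Q(\phi)=\sum_{ij}Q(\chi_{ij}\phi)+Q(\chi_{0}\phi)-\int(\sum|\nabla\chi_{\ast}|^{2})\phi^{2}$ holds with a controllably small error. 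On each $B_{ij}$ the form $Q$ has at most one negative direction, since $\chi_{ij}\phi$ feels a small perturbation of the four-end operator, which has exactly one negative eigenvalue and a gap above it; on $B_{0}$ the single-line positivity forces $Q(\chi_{0}\phi)\ge 0$ for $L^{2}$ functions. Summing gives $\mathrm{ind}(U_{n}-\pi)\le\binom{n}{2}$, and combined with the lower bound this yields equality.

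The main obstacle is the upper bound in the single-line region $B_{0}$: there $\mathcal{L}$ is only nonnegative, not coercive, because of the translational zero mode $H'$ running along each infinite nodal line, so one must rule out these neutral directions producing genuine $L^{2}$ negative eigenvalues. The way I would handle this is to combine the IMS error control (possible precisely because the crossings, and hence the scale of the cutoffs, can be taken arbitrarily large) with a transverse decomposition in $B_{0}$: writing a would-be negative $L^{2}$ eigenfunction near a line as $H'(s)g(t)$ plus a gap-controlled remainder forces $-g''\le o(1)$ with $g\in L^{2}(\mathbb{R})$, hence $g\equiv 0$. Making this quantitative — a uniform spectral-gap estimate for the transverse operator together with the convergence rates of $U_{n}$ to the four-end models near crossings and to $H$ along the lines — is the technical heart of the proof; the remaining bookkeeping is the additive count $\binom{n}{2}=\sum_{i<j}1$.
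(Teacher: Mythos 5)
Your global strategy coincides with the paper's: both arguments use connectedness of the moduli space (a consequence of the classification) together with the $L^{\infty}$-nondegeneracy of Theorem \ref{Main} to conclude that the Morse index is constant on $\mathcal{M}_{2n}$, and both then count negative directions at a configuration whose $\binom{n}{2}$ crossings are far apart (the paper works with the end-to-end solutions of \cite{K3}, which by the classification are of the form $U_{n}-\pi$). The lower bound is obtained in the paper exactly as you propose, by truncating the exponentially decaying negative eigenfunctions of the local four-end operators. For the upper bound your route is genuinely different: you use an IMS partition of unity plus a transverse decomposition along the nodal lines, whereas the paper runs a compactness/contradiction argument on a sequence of negative eigenfunctions orthogonal to the perturbed local modes, splitting into the case where the mass of the eigenfunction drifts away from all crossings (handled by showing the eigenvalue tends to $0$ and the eigenfunction asymptotically lies in the span of the $2n$ linearly growing kernels, contradicting orthogonality) and the case where it concentrates at a crossing (handled by the nondegeneracy of the four-end solution). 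Both mechanisms must confront the same difficulty you flag, namely the translational zero modes along the lines; your transverse-projection fix is plausible, and the uniformity issues you identify as the technical heart are real but of the same nature as what the paper's compactness argument silently absorbs.

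There is, however, one genuine gap: the base case. Your entire count is anchored on the statement that the four-end Jacobi operator has exactly one negative eigenvalue, and your proposed proof of that statement --- ``the single-crossing instance of the argument below'' --- is circular. For $n=2$ the IMS decomposition consists of a single ball $B_{12}$ around the unique crossing and its complement; single-line positivity controls only the complement, and on $B_{12}$ your bound ``at most one negative direction, since $\chi_{12}\phi$ feels a small perturbation of the four-end operator, which has exactly one negative eigenvalue'' invokes precisely the fact to be proven, because the local model at the unique crossing is the four-end operator itself and there is no separation parameter left to exploit. An explicit negative test function (or the bound of \cite{Man}) gives index $\geq 1$, but nothing in your outline yields index $\leq 1$. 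This is a substantial fact: the paper imports it from the variational characterization of four-end solutions in \cite{Gui2}. Your argument becomes sound if the circular step is replaced by that citation (or by an independent spectral analysis of the four-end operator); with that anchor in place, the rest of your scheme can proceed as described.
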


\begin{proof}
First of all, we observe that by the result of \cite{Gui2}, when $n=2,$ the
Morse index of $U_{n}-\pi$ is equal to $1.$ We have developed in \cite{K3} an
end-to-end construction scheme for multiple-end solutions of the Allen-Cahn
equation. Roughly speaking, for each $n\geq2,$ we can glue $n\left(
n-1\right)  /2$ four-end solutions together by matching their ends.
Geometrically, the centers of these four-end solutions are far away from each
other. The zero level set of the solution looks like a desingularization of
the intersection of $n$ lines, where the intersection points are far away from
each other.

It will be suffice for us to show that the Morse index of the solutions $u$
obtained from the end-to-end construction have Morse index $n\left(
n-1\right)  /2.$ We use $z_{1}\left(  u\right)  ,...,z_{n\left(  n-1\right)
/2}\left(  u\right)  $ to denote the centers of the corresponding four-end
solutions $g_{1}\left(  u\right)  ,...,g_{n\left(  n-1\right)  /2}\left(
u\right)  ,$ and use $\eta_{j}\left(  u\right)  $ with $\left\Vert \eta
_{j}\right\Vert _{L^{\infty}}=1$ to denote a choice of the negative
eigenfunctions of the operator $-\Delta+\cos g_{j}.$ Since $z_{j}$ are far
away from each other and $\eta_{j}$ decays exponentially fast at infinity, we
can show that the Morse index of $u$ is at least $n\left(  n-1\right)  /2,$
and each $\eta_{j}$ can be perturbed into a true eigenfunction $\eta_{j}%
^{\ast}$ with negative eigenvalue.

We now show that the Morse index of $u$ is at most $n\left(  n-1\right)  /2,$
if the distances between any two centers for the four-end solutions are large
enough. We will argue by contradiction and assume to the contrary that there
exists a sequence of solutions $u_{k}$ and a sequence of corresponding
negative eigenfunction $\phi_{k}$ of the operator $-\Delta-\cos u_{k}$, with
eigenvalue $\lambda_{k},$ such that $\phi_{k}$ is orthogonal to each $\eta
_{j}^{\ast}\left(  u_{k}\right)  ,$ $j=1,...,n\left(  n-1\right)  /2$. We
normalize it such that $\left\Vert \phi_{k}\right\Vert _{L^{\infty}}=1.$ We
consider two cases.

Case 1. There is a sequence of points $Z_{k}$ such that $\left\vert \phi
_{k}\left(  Z_{k}\right)  \right\vert >\frac{1}{2},$ and as $k\rightarrow
+\infty,$ $\min_{j}$dist$\left(  Z_{k},z_{j}\left(  u_{k}\right)  \right)
\rightarrow+\infty.$

Note that in this case, the distance of $Z_{k}$ to the zero level set of
$u_{k}$ has to be uniformly bounded, otherwise, $\phi_{k}$ will converge
around $Z_{k}$, to a nontrivial bounded solution $\Phi$ of the equation
\[
-\Delta\Phi+\Phi=0\text{ in }\mathbb{R}^{2},
\]
which is impossible. Since away from the centers $z_{j}\left(  u_{k}\right)
$, $u_{k}$ looks like the one dimensional heteroclinic solution, we can show
that $\lambda_{k}\rightarrow0$ as $k\rightarrow+\infty.$ Recall that for each
$u_{k},$ the operator $-\Delta-\cos u_{k}$ has $2n$ linearly independent
kernels $\zeta_{k,1},...,\zeta_{k,2n},$ which grow at most linearly at
infinity. Analyzing the asymptotic behavior of $\phi_{k}$ more closely, we
know that actually $\phi_{k}$ is close to certain linear combination of
$\zeta_{k,j},j=1,...,2n.$ But this contradicts with the fact that $\phi_{k}$
is orthogonal to $\zeta_{k,j}.$

Case 2. $\phi_{k}\left(  z\right)  \rightarrow0$ as $\left\vert z-z_{j}\left(
u_{k}\right)  \right\vert \rightarrow+\infty,$ for each $j,$ uniformly in $k.$

In this case, we still choose $Z_{k}$ such that $\phi_{k}\left(  Z_{k}\right)
=\frac{1}{2}.$ Then dist$\left(  Z_{k},z_{j_{k}}\left(  u_{k}\right)  \right)
\leq C$ for some index $j_{k}.$ Consider the function $\varphi_{k}\left(
z\right)  :=\phi\left(  z-Z_{k}\right)  .$ Then $\varphi_{k}$ converges to a
decaying eigenfunction $\varphi_{\infty}$ of a four-end solution. The
corresponding eigenvalue has to be negative, since the linearized operator of
the four-end solution has no decaying kernel. This contradicts with the
assumption that $\phi_{k}$ is orthogonal to $\eta_{j}^{\ast}\left(
u_{k}\right)  ,j=1,...,2n.$

In conclusion, the Morse index of $u$ has to be $n\left(  n-1\right)  /2$ if
the distance between those $z_{j}\left(  u\right)  $ are large.
\end{proof}

We remark that in \cite{Manuel}, multiple-end solutions with almost parallel
ends have been constructed. The zero level set of these solutions are close to
solutions of the $n$-component Toda system%
\begin{equation}
\left\{
\begin{array}
[c]{l}%
q_{1}^{\prime\prime}=-e^{q_{1}-q_{2}},\\
q_{j}^{\prime\prime}=e^{q_{j}-q_{j+1}}-e^{q_{j-1}-q_{j}},j=2,...,n-1,\\
q_{n}^{\prime\prime}=e^{q_{n-1}-q_{n}}.
\end{array}
\right.  \label{Toda}%
\end{equation}
The Morse index of these solutions is equal to the Morse index of the Toda
system. Since $\left(  \ref{Toda}\right)  $ is a system of ODE, its solutions
are automatically $L^{\infty}$ nondegenerate. A corollary of Proposition
\ref{Morse} is that each solution of $\left(  \ref{Toda}\right)  $ has Morse
index $\frac{n\left(  n-1\right)  }{2}.$

\end{document}